\def \Spec{\operatorname{Spec}}
\def \gr{\operatorname{gr}}
\numberwithin{equation}{section}
\newtheorem{theorem}{Theorem}[section]
\newtheorem{lemma}[theorem]{Lemma}
\newtheorem{proposition}[theorem]{Proposition}
\newtheorem{corollary}[theorem]{Corollary}
\newtheorem{definition}[theorem]{Definition}
\newtheorem{example}[theorem]{Example}
\newtheorem{remark}[theorem]{Remark}
\begin{document}
\title{Commutative Hopf structures over a loop}
\author{Hua-Lin Huang}
\address{School of Mathematics, Shandong University, Jinan
250100, China} \email{hualin@sdu.edu.cn}
\author{Gongxiang Liu}
\address{Department of Mathematics, Nanjing University, Nanjing 210093, China}
\email{gxliu@nju.edu.cn}
\author{Yu Ye}
\address{Department of Mathematics, University of Science and Technology of China,
 Hefei 230026, China}
\email{yeyu@ustc.edu.cn}
\date{}
\maketitle
\begin{abstract}
Let $k$ be an algebraically closed field of characteristic $p>0$.
For a loop $\circlearrowleft$, denote its path coalgebra by
$k\circlearrowleft$. In this paper, all the finite-dimensional
commutative Hopf algebras over the sub coalgebras of
$k\circlearrowleft$ are given. As a direct consequence, all the
commutative infinitesimal groups $\mathcal{G}$ with
dim$_{k}$Lie$(\mathcal{G})=1$ are classified. \vskip 5pt

\noindent{\bf Keywords} \ \ Path coalgebra, Unipotent group, Frobenius map \\
\noindent{\bf 2000 MR Subject Classification} \ \ 16W30, 14L15
\end{abstract}

\section{introduction}

This paper is concerned with the quiver realization of
finite-dimensional cocommutative Hopf algebras. As is well-known,
any such algebra can be viewed as the group algebra of a finite
algebraic $k$-group $\mathcal{G}$. Considerable attention has been
received by these algebraic groups.

Quivers are oriented diagrams consisting of vertices and arrows
\cite{ARS}. Due to the well-known theorem of Gabriel given in the
early 1970s, these combinatorial stuffs make the abstract elementary
algebras and their representations visible. This point of view has
since then played a central role and is generally accepted as the
starting point in the modern representation theory of associative
algebras. Naturally there is a strong desire to apply this handy
quiver tool to other algebraic structures. Such idea for Hopf
algebras appeared explicitly in \cite{Cil,Cil1,CR,GS} and was showed
to be very effective in dealing with the structures of
finite-dimensional pointed (or dually, basic) Hopf algebras when the
characteristic of the base field is 0 \cite{CR,CHYZ,GL,L,HL,FP}.

Comparing to the characteristic 0 case, there is hardly any work
dealing with the positive characteristic case by using quiver
methods, see however a recent work of Cibils, Lauve and Witherspoon
\cite{clw}. One main difficulty in the positive characteristic case
is that general pointed Hopf algebras are not generated by
group-likes and skew-primitive elements. While in the characteristic
0 case, the well-known Andruskiewitsch-Schneider Conjecture
\cite{AS3} claims that all finite-dimensional pointed Hopf algebras
are indeed generated by their group-likes and skew-primitive
elements.

This paper can be considered as our first try to apply quiver
methods to the category of pointed Hopf algebras over an
algebraically closed field $k$ with characteristic $p>0$, especially
to finite-dimensional cocommutative Hopf algebras over $k$ or
equivalently to the category of finite algebraic $k$-groups. One can
show that the connected component of a finite-dimensional
cocommutative Hopf algebra can be embedded into the path coalgebra
of a multi-loop quiver (see Corollary 2.2). So as the first step,
one can analyze the minimal case, that is, Hopf structures over the
path coalgebra of the one-loop quiver. This is exactly what we do in
this paper. The main result of the paper is the following theorem
(see Theorem 5.1).
\begin{theorem} Let $n\in \mathbb{N}^{+}$, any commutative Hopf structure $H$ over $k\circlearrowleft_{p^{n}}$ is
isomorphic to a $L(n,d)$ for some $d$.
\end{theorem}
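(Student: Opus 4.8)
The plan is to exploit the rigidity of the connected coalgebra $k\circlearrowleft_{p^{n}}$ together with the interaction between commutativity and the Frobenius ($p$-th power) map. Write $N=p^{n}$ and let $g_{0},g_{1},\dots,g_{N-1}$ be the path basis, so that $\Delta g_{m}=\sum_{i+j=m}g_{i}\otimes g_{j}$ and $\varepsilon g_{m}=\delta_{m,0}$. First I would fix the algebra data that is forced by the coalgebra. Since $k\circlearrowleft_{p^{n}}$ is pointed irreducible with unique group-like $g_{0}$, the unit of any bialgebra structure on it must be $1=g_{0}$, and a direct computation of $\Delta g_{m}$ shows that the space of primitives is exactly $kg_{1}$. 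Because the coradical $kg_{0}$ is a sub-coalgebra, the coradical filtration $C_{m}=\langle g_{0},\dots,g_{m}\rangle$ is multiplicative, i.e. $C_{i}C_{j}\subseteq C_{i+j}$; hence every Hopf structure is filtered and descends to an associated graded Hopf algebra $\gr H$ on the same coalgebra. Connectedness also supplies the antipode automatically, so only the filtered bialgebra structure has to be classified.

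The key structural observation is that commutativity couples the multiplication to the Frobenius map. As $H$ is commutative of characteristic $p$, one has $\Delta(a^{p})=\Delta(a)^{p}$ for every $a$, so $a\mapsto a^{p}$ carries primitives to primitives; in particular $g_{1}^{p}\in kg_{1}$, say $g_{1}^{p}=\lambda g_{1}$. More generally I would track the ``$p$-adic levels'' $g_{1},g_{p},g_{p^{2}},\dots$ and the relations $g_{p^{i}}^{p}\equiv c_{i}\,g_{p^{i+1}}\ (\mathrm{mod}\ C_{p^{i+1}-1})$ extracted from the bialgebra axiom that $\Delta$ be an algebra map. Over the algebraically closed field $k$ the coalgebra automorphisms $g_{m}\mapsto t^{m}g_{m}$ with $t\in k^{\times}$ rescale all the $c_{i}$ simultaneously, which lets me normalise each nonzero scalar to $1$; the surviving datum is then the discrete pattern of vanishing among the $c_{i}$, which I expect to be precisely the invariant $d$ labelling $L(n,d)$.

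Next I would reconstruct the full multiplication. The bialgebra axiom $\Delta(xy)=\Delta(x)\Delta(y)$, applied to $\Delta g_{l}\,\Delta g_{m}=\sum (g_{a}g_{c})\otimes(g_{b}g_{d})$, sets up a downward recursion that expresses every product $g_{l}\cdot g_{m}$ in terms of products of strictly smaller index, so that the whole multiplication is determined by the finitely many level relations above together with associativity. Carrying out this recursion and comparing with the explicit presentation of $L(n,d)$ should produce the desired isomorphism $H\cong L(n,d)$. As a conceptual cross-check, and an alternative route, one may dualise: $H^{*}$ is a commutative and cocommutative Hopf algebra whose underlying algebra is the monogenic local ring $k[y]/(y^{p^{n}})$, that is, the coordinate ring of a one-dimensional infinitesimal group scheme, equivalently the $n$-th Frobenius kernel of a one-dimensional formal group; such objects are classified over $k=\bar{k}$ by a truncation of the height, matching the single parameter $d$.

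The main obstacle I anticipate is the rigidity step: showing that, after the normalisation above, the associativity and coassociativity constraints leave no continuous moduli, so that the isomorphism type of $H$ is governed by the discrete invariant $d$ alone. Concretely, one must verify that the lower-filtration corrections implicit in $g_{p^{i}}^{p}\equiv c_{i}\,g_{p^{i+1}}$ can always be absorbed by a filtered change of basis that still respects the coalgebra $k\circlearrowleft_{p^{n}}$, and that no additional invariant is hidden in the mixed products across distinct $p$-adic levels. Establishing this absorption, and thereby collapsing the a priori large space of structure constants onto the finite list $\{L(n,d)\}$, is the heart of the argument.
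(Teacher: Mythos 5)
Your preparatory steps (unit and primitives forced by the coalgebra, multiplicativity of the coradical filtration, automatic antipode) are fine, but the key structural device of your argument is wrong, and the step you yourself call ``the heart of the argument'' is never supplied. The scalars $c_i$ you propose to track, defined by $g_{p^i}^p \equiv c_i\, g_{p^{i+1}} \pmod{C_{p^{i+1}-1}}$, vanish identically for \emph{every} Hopf structure on $k\circlearrowleft_{p^n}$, so the ``pattern of vanishing among the $c_i$'' carries no information and cannot be the invariant $d$. Indeed, since the coradical filtration is an algebra filtration, the associated graded is a graded Hopf structure on the same coradically graded coalgebra; the bialgebra axiom forces its product to be the binomial one, $\bar{g}_i\bar{g}_j=\binom{i+j}{i}\bar{g}_{i+j}$ (comparing coefficients of $\bar{g}_{i+j-1}\otimes\bar{g}_1$ gives the Pascal recursion $c_{ij}=c_{i-1,j}+c_{i,j-1}$), and then $\bar{g}_{p^i}^{\,p}$ carries the factor $\binom{p^{i+1}}{p^i}\equiv 0 \pmod p$, so $g_{p^i}^p$ always lies in $C_{p^{i+1}-1}$. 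The Frobenius in fact moves \emph{down} the filtration, not up: in $L(n,d)$ one has $\alpha_{p^i}^p=\alpha_{p^{i-d}}$, i.e.\ the $p$-th power map is the $d$-th power of the Verschiebung $\mathcal{V}:\alpha_{p^i}\mapsto\alpha_{p^{i-1}}$, and the datum to be pinned down is \emph{which lower} basis element (if any) the element $\alpha_{p^i}^p$ equals after normalisation. Your rescaling argument says nothing about this; moreover your ``downward recursion'' determines each product $g_lg_m$ only up to a primitive correction at every stage, and proving that these corrections can be absorbed so that the structure constants collapse onto the discrete list $\{L(n,d)\}$ is precisely the content of the theorem, which you explicitly defer rather than prove.

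Your dual ``cross-check'' is actually much closer to a viable proof than your main line: $H^*$ is indeed commutative and cocommutative with underlying algebra $k[y]/(y^{p^n})$, hence the coordinate ring of a monogenic infinitesimal commutative group scheme, and such group schemes are classified by a truncated height (heights $1,\dots,n$ together with all heights $\geq n+1$ collapsing to one class, matching $d=0,\dots,n$). But that classification is itself a nontrivial theorem requiring Dieudonn\'e theory or an equivalent; asserting it without proof or reference merely relocates the gap to the dual side, and it also does not by itself show that each $L(n,d)$ is a Hopf algebra, which is part of the statement. The paper closes these gaps by a different reduction: it establishes the dichotomy at level $p$ (either $g_1^p$ is normalisable to $g_1$, which forces $H\cong(k\mathbb{Z}_{p^n})^*$, or $g_1^p=0$, which forces $\Spec H$ to be infinitesimal unipotent), proves uniseriality of $H$, computes that the Verschiebung is the path shift $\alpha_{p^i}\mapsto\alpha_{p^{i-1}}$ so that $\Spec H$ is $\mathcal{V}$-uniserial, and then invokes the Farnsteiner--R\"ohrle--Voigt classification of uniserial infinitesimal unipotent commutative $k$-groups, matching each group on that list with an $L(n,d)$. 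Some such external classification input (or a genuine rigidity computation replacing it) is exactly what your proposal is missing.
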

See Section 5 for the definition of $L(n,d)$. As a direct
consequence of this theorem,  all commutative infinitesimal groups
$\mathcal{G}$ with dim$_{k}$Lie$(\mathcal{G})=1$ are determined.

The paper is organized as follows. All needed knowledge about path
coalgebras is summarized in Section 2. Moreover, the uniserial
property of the Hopf structures over $k\circlearrowleft$ is also
established in this section. For later use, all endomorphisms of the
path coalgebra $k\circlearrowleft$ are given in Section 3. As a key
step, we need to grasp all possible Hopf structures over
$k\circlearrowleft_{p}$ at first and this task is finished in
Section 4. In addition, we also show that the property of a Hopf
structure over $k\circlearrowleft_{p^{n}}$ is almost determined by
that of its restriction to $k\circlearrowleft_{p}$. Combining the
work of Farnsteiner-R\"ohrle-Voigt on unipotent group of complexity
1 \cite{FRV}, the proof of Theorem 1.1 is given in Section 5 at
last.

Throughout the paper we will be working over an algebraically closed
field $k$ of characteristic $p>0$. We freely use the results,
notations, and conventions of \cite{Mon}.

\section{Path coalgebras}

\subsection{} Given a quiver  $Q=(Q_{0},Q_{1})$ with $Q_{0}$ the set of
vertices and $Q_{1}$  the set of arrows, denote by $kQ$  the
 $k$-space with basis the set of all paths in $Q$. Over $kQ$, there is a natural coalgebra structure defined as follows.
  For
 $\alpha \in Q_{1}$, let $s(\alpha)$ and $t(\alpha)$ denote
 respectively the starting and ending vertex of $\alpha$.
Then comultiplication $\Delta$ is given by
 $$\Delta(p)=\alpha_{l}\cdots \alpha_{1}
 \otimes s(\alpha_{1})+\sum_{i=1}^{l-1}\alpha_{l}\cdots \alpha_{i+1}
 \otimes\alpha_{i}\cdots \alpha_{1}+ t(\alpha_{l})\otimes \alpha_{l}\cdots \alpha_{1}$$
 for each path $p=\alpha_{l}\cdots \alpha_{1}$ with each $\alpha_{i}\in
 Q_{1}$; and the counit $\varepsilon$ is defined to be  $\varepsilon(p)=0$ for $l\geq 1$ and $1$ if $l=0$
($l=0$ means $p$ is a vertex). This is a coradically graded pointed
coalgebra and we also denote it by $kQ$. Like the path algebras
case, the path coalgebras serve as the cofree pointed coalgebras. In
fact, Chin and Montgomery showed the following result \cite{CM}:

\begin{lemma} Let $C$ be a pointed cocalgebra, then there
exists a unique quiver $Q(C)$ such that $C$ can be embedded into the
path coalgebra $kQ(C)$ as a large sub coalgebra.\end{lemma}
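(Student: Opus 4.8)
The plan is to realize $Q(C)$ as the \emph{Ext (Gabriel) quiver} of $C$ and then invoke the cofreeness of path coalgebras. Since $C$ is pointed, its coradical is $C_0=\bigoplus_{g\in G(C)}kg$, where $G(C)$ is the set of group-likes. First I would define $Q(C)$ intrinsically: take $Q_0=G(C)$ as the vertex set, and let the number of arrows from $h$ to $g$ be the dimension of the $(g,h)$-isotypic component of the $C_0$-bicomodule $W:=C_1/C_0$ (equivalently, the space of nontrivial $(g,h)$-skew primitives modulo the trivial one). With this choice the path coalgebra is exactly the cotensor coalgebra, $kQ(C)=\CoT_{C_0}(W)$, with $W$ placed in degree one; note that every ingredient here is canonically attached to $C$, which is what will eventually give uniqueness.

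Next I would construct the embedding using the universal property of the cotensor coalgebra. Because $C_0$ is a direct sum of one-dimensional subcoalgebras it is cosemisimple, hence coseparable and injective as a $C_0$-bicomodule; therefore the inclusion $C_0\hookrightarrow C$ splits as coalgebras and, more generally, the coradical filtration splits as $C_0$-bicomodules. This yields a coalgebra retraction $g_0:C\To C_0$ together with a $C_0$-bicomodule projection $g_1:C\To W$ that vanishes on $C_0$. By the universal property of $\CoT_{C_0}(W)$, the pair $(g_0,g_1)$ assembles into a unique coalgebra map $\theta:C\To \CoT_{C_0}(W)=kQ(C)$ with $\pi_0\theta=g_0$ and $\pi_1\theta=g_1$.

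The heart of the argument is injectivity of $\theta$, and here I would appeal to the Heyneman--Radford injectivity criterion \cite{Mon}: a coalgebra map is injective precisely when its restriction to $C_1=C_0\wedge C_0$ is injective. By construction $\theta$ restricts to an isomorphism $C_0\xrightarrow{\sim}kQ_0$ and induces an isomorphism $C_1/C_0\xrightarrow{\sim}kQ_1=W$; a short diagram chase then shows that if $x\in C_1$ with $\theta(x)=0$ its image in $C_1/C_0$ vanishes, so $x\in C_0$ and hence $x=0$. Thus $\theta|_{C_1}$ is injective and $\theta$ is a coalgebra embedding. Largeness is then immediate, since $\theta(C)\supseteq\theta(C_0)=kQ_0$ and $\theta(C)\supseteq\theta(C_1)\supseteq kQ_1$, so the image contains all vertices and all arrows.

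Finally, uniqueness of $Q(C)$ follows because the vertex set of any admissible host is forced to be $G(C)$, while largeness of an embedding $C\hookrightarrow kQ'$ forces the arrow space of $Q'$ to coincide with $W=C_1/C_0$ as a $C_0$-bicomodule, so $Q'\cong Q(C)$. I expect the main obstacle to be the injectivity step: concretely, verifying that $\theta|_{C_1}$ is injective and that the coalgebra retraction $g_0$ together with the bicomodule splitting $g_1$ genuinely exist. Both hinge on $C_0$ being cosemisimple, which is exactly where pointedness of $C$ enters; the largeness and uniqueness assertions are comparatively routine once the embedding is in hand.
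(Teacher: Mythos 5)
The paper does not prove this lemma at all: it is quoted from Chin--Montgomery \cite{CM} as a known result, so your proposal can only be measured against the standard proof in the literature. Your route --- realize $kQ(C)$ as the cotensor coalgebra $\CoT_{C_0}(C_1/C_0)$, build a coalgebra map $\theta$ from a pair $(g_0,g_1)$ via the universal property, and get injectivity from the Heyneman--Radford criterion --- is exactly that standard argument (it is also the skeleton of the Van Oystaeyen--Zhang dual Gabriel theorem that the paper quotes as Lemma 2.5), and your bookkeeping for largeness and for uniqueness of the quiver is correct.

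There is, however, a genuine gap at the step you yourself flag: the existence of the coalgebra retraction $g_0\colon C\to C_0$. As written, your justification is circular and conflates two categories. First, $C$ carries no natural $C_0$-bicomodule structure until $g_0$ exists (the coaction you need is $(g_0\otimes\mathrm{id})\Delta$ and its right-hand analogue), so ``the coradical filtration splits as $C_0$-bicomodules'' cannot be invoked to produce $g_0$. Second, even where a bicomodule splitting is available, a bicomodule retraction of $C_0\hookrightarrow C$ need not be a coalgebra map; injectivity of $C_0$ as a bicomodule does not by itself yield a splitting \emph{as coalgebras}. What is actually needed is a dual Wedderburn--Malcev theorem: every coalgebra whose coradical is coseparable (in particular every pointed coalgebra) admits a coalgebra projection onto its coradical. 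This statement is true, but it is the real content hidden in your one sentence: in finite dimensions one dualizes the Wedderburn--Malcev splitting of $C^{*}\twoheadrightarrow C^{*}/J(C^{*})\cong C_0^{*}$, and in general one must extend the splitting inductively along the coradical filtration (using the cohomological vanishing that coseparability of $C_0$ provides) or run a compatibility argument over the directed system of finite-dimensional subcoalgebras. Once $g_0$ is granted, the rest of your argument is sound: the category of $C_0$-bicomodules is then semisimple (as $C_0\otimes C_0^{\mathrm{cop}}$ is again a group-like coalgebra), which yields $g_1$, and the universal property, the injectivity check on $C_1$, largeness, and uniqueness of $Q(C)$ all go through as you describe.
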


This unique quiver $Q(C)$ is called the \emph{dual Gabriel quiver}
of $C.$ Here ``large" means that $C$ contains all group-like
elements $Q(C)_{0}$ and all skew-primitive elements of $kQ(C)$. Note
that the skew-primitive elements are indeed corresponding to paths
of length 1, i.e., arrows. Now the following conclusion is clear.

\begin{corollary}
Let $C$ be an irreducible cocommutative pointed coalgebra, then its
dual Gabriel quiver $Q(C)$ has only one vertex.
\end{corollary}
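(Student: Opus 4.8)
The plan is to reduce the statement to a count of group-like elements. By Lemma~2.1 the coalgebra $C$ embeds into its path coalgebra $kQ(C)$ as a large sub coalgebra, and by the very meaning of ``large'' the vertex set $Q(C)_{0}$ lies inside $C$ and consists exactly of the group-like elements of $C$; indeed, in the Chin--Montgomery construction the vertices of the dual Gabriel quiver are in bijection with the one-dimensional simple subcoalgebras of $C$, equivalently with the group-likes $G(C)$. Thus it suffices to show that an irreducible pointed coalgebra has precisely one group-like element.

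First I would recall that, since $C$ is pointed, every simple subcoalgebra of $C$ is one-dimensional, hence of the form $kg$ for a group-like element $g$; conversely each group-like spans a one-dimensional simple subcoalgebra. This sets up a bijection between $G(C)$ and the set of simple subcoalgebras of $C$. Next, because $C$ is irreducible, it possesses a unique simple subcoalgebra. Combining the two observations, $C$ has exactly one group-like element, so $|Q(C)_{0}|=1$ and $Q(C)$ has a single vertex.

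The argument is essentially a definition-chase, so there is no serious obstacle; the only point requiring care is the identification of $Q(C)_{0}$ with $G(C)$, which is exactly what the largeness clause of Lemma~2.1 guarantees. I would note in passing that cocommutativity is not actually needed for this particular conclusion: it holds for any irreducible pointed coalgebra. Cocommutativity enters only to secure pointedness in the situations of interest --- over the algebraically closed field $k$ the dual of a cocommutative coalgebra is commutative, forcing all simple subcoalgebras to be one-dimensional --- but since pointedness is already hypothesized in the statement, it plays no role in the proof itself.
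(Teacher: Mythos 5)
Your proof is correct and is exactly the argument the paper intends: the paper gives no explicit proof (it states the corollary as ``clear'' after Lemma~2.1), and the intended reasoning is precisely your definition-chase identifying the vertices $Q(C)_{0}$ with the group-likes of $C$ and using irreducibility plus pointedness to get a unique group-like. Your side remark that cocommutativity is superfluous once pointedness is assumed is also accurate.
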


 A natural question is when there is a Hopf structure on a path
coalgebra. We will see not every quiver can serve as the dual
Gabriel quiver of a pointed Hopf algebra and those do are called
\emph{Hopf quivers} by Cibils and Rosso \cite{CR}. Recall that a
\emph{ramification data} $r$ of a group $G$ is a positive central
element of the group ring of $G$: let $\mathcal{C}$ be the set of
conjugacy classes, $r =\Sigma _{C\in \;\mathcal{C}} r_{C}C$ is a
formal sum with non-negative integer coefficients.

\begin{definition} Let $G$ be a group and $r$ a
ramification data. The corresponding Hopf quiver $Q(G,r)$ has set of
vertices the elements of $G$ and has $r_{C}$ arrows from $x$ to $cx$
for each $x \in G$ and $c\in C$.
\end{definition}

One of the main results in \cite{CR} states that there is a graded
Hopf algebra structure on the path coalgebra $kQ$ if and only if $Q$
is a Hopf quiver. In this case, $kQ_{0}$ is a group algebra and
$kQ_{1}$ is a $kQ_{0}$-Hopf bimodule. Moreover, the product rule of
paths can be displayed as follows.

Let $p$ be a path of length $l$. An $n$-thin split of it is a
sequence $(p_1, \ \cdots, \ p_n)$ of vertices and arrows such that
the concatenation $p_n \cdots p_1$ is exactly $p.$ These $n$-thin
splits are in one-to-one correspondence with the $n$-sequences of
$(n-l)$ 0's and $l$ 1's. Denote the set of such sequences by
$D_l^n.$ Clearly $|D_l^n|={n \choose l}.$ For $d=(d_1, \ \cdots, \
d_n) \in D_l^n,$ the corresponding $n$-thin split is written as
$dp=((dp)_1, \ \cdots, \ (dp)_n),$ in which $(dp)_i$ is a vertex if
$d_i=0$ and an arrow if $d_i=1.$

Let $\alpha=a_m \cdots a_1$ and $\beta=b_n \cdots b_1$ be paths of
lengths $m$ and $n$ respectively. Let $d \in D_m^{m+n}$ and $\bar{d}
\in D_n^{m+n}$ the complement sequence which is obtained from $d$ by
replacing each 0 by 1 and each 1 by 0. Define an element
 in $kQ_{m+n},$
$$(\alpha \cdot \beta)_d=[(d\alpha)_{m+n}.(\bar{d}\beta)_{m+n}] \cdots
[(d\alpha)_1.(\bar{d}\beta)_1],$$ where
$[(d\alpha)_i.(\bar{d}\beta)_i]$ is understood as the action of
$kQ_0$-Hopf bimodule on $kQ_1$ and these terms in different brackets
are put together by cotensor product, or equivalently concatenation.
In these notations, the formula of the product of $\alpha$ and
$\beta$ is given as follows (see pages 245-246 in \cite{CR}):
\begin{equation}
\alpha \cdot \beta=\sum_{d \in D_m^{m+n}}(\alpha \cdot \beta)_d \ .
\end{equation}

\subsection{} In this paper, we only consider the very simple Hopf quiver,
a loop $\circlearrowleft$. By setting $G:=e$ and $r:=e$, one can see
that a loop is just the Hopf quiver $Q(G,r)$. For any natural number
$n$, denote the unique path of length $n$ by $\alpha_{n}$. Since the
group $G$ is trivial now, the Hopf bimodule action is trivial too.
Thus the product rule over $k\circlearrowleft$ is very simple. That
is,
\begin{equation}
\alpha_{n} \cdot \alpha_{m}=\left (
\begin{array}{cc} m+n \\n
\end{array}\right)\alpha_{m+n}.
\end{equation}
This is indeed the familiar Hopf algebra $(k[x])^{\circ}$, the
finite dual of $k[x]$. Sometimes, we denote this Hopf structure
still by $k\circlearrowleft$ and one can discriminate the exact
meaning by context. Note that this is a graded Hopf algebra with
length grading.

For a quiver $Q$, define $kQ_{d}:=\oplus_{i=0}^{d-1}kQ(i)$ where
$Q(i)$ is the set of all paths of length $i$ in $Q$. Clearly, for
any $i\geq0$, $k\circlearrowleft_{p^{i}}$ is a sub Hopf algebra of
$k\circlearrowleft$.

\begin{lemma} Let $H$ be a
finite-dimensional sub Hopf algebra of $k\circlearrowleft$, then
$H\cong k\circlearrowleft_{p^{i}}$ for some $i\geq 0$.
\end{lemma}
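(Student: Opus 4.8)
The plan is to pass to the associated graded Hopf algebra and reduce the whole question to a combinatorial statement about binomial coefficients modulo $p$. Write $C:=k\circlearrowleft$ and recall that $C$ is coradically graded, so its coradical filtration is the length filtration $C_n=\bigoplus_{i=0}^n k\alpha_i$. Being an irreducible pointed subcoalgebra of $C$, $H$ is connected with coradical $k\alpha_0$, so $H_0=H\cap C_0$; this is the hypothesis under which the coradical filtration of a subcoalgebra is the restricted one, giving $H_n=H\cap C_n$ for all $n$ (standard, see \cite{Mon}). Forming $\gr H=\bigoplus_n H_n/H_{n-1}$ then produces an injective graded sub-Hopf-algebra map $\gr H\hookrightarrow\gr C=k\circlearrowleft$, and each homogeneous piece $H_n/H_{n-1}$ embeds into $C_n/C_{n-1}=k\alpha_n$, hence is at most one-dimensional. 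Setting $S=\{n\ge 0:H_n\neq H_{n-1}\}$, we get $\gr H=\bigoplus_{n\in S}k\alpha_n$ and $\dim_k H=\dim_k\gr H=|S|$, with $0\in S$ (the case $S=\{0\}$ being the trivial $H=k=k\circlearrowleft_{p^0}$).

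Next I would use that $\gr H$ is a sub-coalgebra and a sub-algebra in turn. For $n\in S$ we have $\alpha_n\in\gr H$, hence $\Delta(\alpha_n)=\sum_{i=0}^n\alpha_{n-i}\otimes\alpha_i\in\gr H\otimes\gr H$; since the tensors $\alpha_a\otimes\alpha_b$ are linearly independent, every coefficient forces $i\in S$ for all $0\le i\le n$, so $S$ is downward closed, i.e. $S=\{0,1,\dots,N\}$ with $N=\max S$ and $\gr H=k\circlearrowleft_{N+1}$. Now closure under the product $\alpha_m\cdot\alpha_n=\binom{m+n}{m}\alpha_{m+n}$ requires $\binom{m+n}{m}\equiv 0\pmod p$ whenever $0\le m,n\le N$ and $m+n>N$. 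The key claim is that this holds exactly when $d:=N+1$ is a power of $p$. One direction is the already-noted fact that $k\circlearrowleft_{p^i}$ is a sub-Hopf-algebra, which also follows from Lucas' theorem since the carry-free sum of two integers $<p^i$ remains $<p^i$. For the converse, if $d$ is not a power of $p$ I would split its base-$p$ expansion $d=\sum_j d_j p^j$ as $d=m+n$ with $m,n\ge 1$ and no carries: lower the top digit by one (taking $n=p^J$, $m=d-p^J$) if $d_J\ge 2$, or lower a nonzero lower digit by one (taking $n=p^j$, $m=d-p^j$) if the top digit is $1$. By Kummer's theorem $\binom{d}{m}\not\equiv 0\pmod p$, while $\alpha_d\notin\gr H$ as $d>N$, contradicting multiplicative closure. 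Hence $N+1=p^i$ and $\gr H=k\circlearrowleft_{p^i}$.

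Finally a dimension count upgrades the conclusion from $\gr H$ to $H$: for $n\le p^i-1$ we have $\dim_k H_n=\sum_{m\le n}\dim_k(\gr H)_m=n+1=\dim_k C_n$, and since $H_n\subseteq C_n$ this forces $H_n=C_n$; taking $n=p^i-1$ gives $H=H_{p^i-1}=C_{p^i-1}=k\circlearrowleft_{p^i}$. I expect the filtration bookkeeping and the comultiplication step to be routine precisely because $C$ is coradically graded; the genuine obstacle is the combinatorial step identifying the multiplicatively closed truncations $k\circlearrowleft_d$ with the $p$-power truncations, for which the carry analysis via Lucas' and Kummer's theorems is the crux.
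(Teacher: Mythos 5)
Your proof is correct, but it follows a genuinely different route from the paper's. The paper disposes of this lemma in one line by duality: having identified $k\circlearrowleft$ with (the irreducible component of) the finite dual $(k[x])^{\circ}$, a finite-dimensional sub Hopf algebra $H\subseteq k\circlearrowleft$ dualizes to a finite-dimensional Hopf quotient of $k[x]$, and the paper cites the known classification of such quotients as $k[x]/(x^{p^{i}})$, whence $H\cong\bigl(k[x]/(x^{p^{i}})\bigr)^{\ast}\cong k\circlearrowleft_{p^{i}}$. (Strictly speaking, that classification also needs the connectedness of $H$, equivalently the locality of $H^{\ast}$, since $k[x]$ has further finite-dimensional Hopf quotients cut out by additive polynomials, e.g. $k[x]/(x^{p}-x)$; the paper's phrasing glosses over this.) You instead never leave the path coalgebra: you place $\gr H$ inside $\gr(k\circlearrowleft)=k\circlearrowleft$ using the standard fact that the coradical filtration of a subcoalgebra is the restricted one, show by coassociativity that the support of $\gr H$ is an interval $\{0,1,\dots,N\}$, pin down $N+1=p^{i}$ by the Lucas/Kummer carry analysis of $\binom{m+n}{m}\bmod p$, and lift back from $\gr H$ to $H$ by a dimension count. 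Your argument is longer but self-contained and stays within the combinatorial framework the paper itself sets up (it effectively re-proves the content of Lemma 2.7 and anticipates the multiplicative-closure reasoning of Proposition 2.8); it also yields the slightly sharper conclusion that $H$ \emph{equals} the subspace $k\circlearrowleft_{p^{i}}$ of $k\circlearrowleft$, not merely that it is isomorphic to it. What the paper's approach buys is brevity and a conceptual link to the additive group scheme, at the cost of resting on the identification with $(k[x])^{\circ}$ and an external classification fact stated somewhat imprecisely.
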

\begin{proof} This is follows directly from the known fact that
$k[x]/(x^{p^{i}})$ are all Hopf quotients of $k[x]$.
\end{proof}

Van Oystaeyen and Zhang proved the dual Gabriel Theorem for
coradically graded pointed Hopf algebras (Theorem 4.5 in \cite{FP}):

\begin{lemma} Let $H$ be a coradically graded pointed Hopf algebra, then its dual Gabriel quiver $Q(H)$ is a
Hopf quiver and there is a Hopf embedding
$$H\hookrightarrow kQ(H).$$
\end{lemma}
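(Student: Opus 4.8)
The plan is to combine the Chin--Montgomery embedding (Lemma 2.1) with the structure theory of Hopf bimodules over a group algebra and with the cofreeness (universal property) of the cotensor coalgebra. First I would record what pointedness and the coradical grading supply. Writing $H=\bigoplus_{n\ge 0}H_{n}$, pointedness forces the coradical $H_{0}=kG$ with $G=G(H)$, and this degree-zero part is a group algebra and a sub Hopf algebra. The degree-one part $H_{1}$ (the arrow space) inherits, from the multiplication and comultiplication of $H$ restricted to degrees $0$ and $1$, the structure of a $kG$-Hopf bimodule. By the structure theorem for Hopf bimodules over a group algebra the block multiplicities are constant along conjugacy classes, so $H_{1}$ decomposes so as to produce nonnegative integers $r_{C}$, $C\in\mathcal C$, that is, a ramification data $r$, with exactly $r_{C}$ arrows from $x$ to $cx$ for all $x\in G$, $c\in C$. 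Hence $Q(H)=Q(G,r)$ is a Hopf quiver.

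Next I would pin down the ambient object. By the Cibils--Rosso theorem $kQ(G,r)$ carries a graded Hopf algebra structure; concretely it is the cotensor Hopf algebra $\T^{c}_{kG}(H_{1})=\bigoplus_{n\ge 0}H_{1}^{\square_{kG} n}$ built from the very $kG$-Hopf bimodule $H_{1}$ found above, with degree-zero part $kG$ and degree-one part $H_{1}$. I make this specific choice of Hopf structure precisely so that its degree-$0$ and degree-$1$ data coincide, on the nose, with those of $H$. The Chin--Montgomery embedding $\iota\colon H\hookrightarrow kQ(H)$ is then a graded coalgebra monomorphism that is the identity on $kG$ and the inclusion on $H_{1}$.

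The main step is to upgrade $\iota$ from a coalgebra map to a Hopf algebra embedding, and the tool is the cofree/universal property of the cotensor coalgebra: a graded coalgebra homomorphism from any coalgebra into $\T^{c}_{kG}(H_{1})$ is uniquely determined by its two corestrictions, to degree $0$ (a coalgebra map into $kG$) and to degree $1$ (an induced bicomodule map into $H_{1}$). Now both $\iota\circ m_{H}$ and $m_{kQ}\circ(\iota\otimes\iota)$ are graded coalgebra maps $H\otimes H\to kQ(H)$, since multiplication in a bialgebra is a coalgebra map and coalgebra maps compose and tensor. By the universal property it suffices to check that these two composites agree after projection to degrees $0$ and $1$. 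The degree-$0$ check reduces to the equality of the two products on $kG\otimes kG$, both being the group multiplication; the degree-$1$ check reduces to the equality of the two $kG$-Hopf bimodule structures on $H_{1}$, which holds by the deliberate choice made above. Uniqueness then forces $\iota\circ m_{H}=m_{kQ}\circ(\iota\otimes\iota)$, so $\iota$ is multiplicative; as it already respects unit, counit and comultiplication it is a bialgebra embedding, and a bialgebra map between Hopf algebras automatically commutes with the antipodes, yielding the desired Hopf embedding.

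I expect the genuine difficulty to lie in the degree-one compatibility together with a clean formulation of the cotensor-coalgebra rigidity statement: one must verify that the Hopf bimodule structure on the arrow space read off from $H$ is exactly the one feeding the Cibils--Rosso construction, so that the two graded coalgebra maps truly share their degree-$\le 1$ parts. Everything else --- pointedness giving $H_{0}=kG$, the conjugacy-class decomposition producing the ramification data, and the automatic preservation of units and antipodes --- is comparatively routine bookkeeping.
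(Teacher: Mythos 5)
You should first note that the paper contains no proof of this lemma at all: it is quoted as Theorem 4.5 of Van Oystaeyen--Zhang \cite{FP}, so there is no in-paper argument to compare against. Your proof is correct and is essentially the standard argument of that cited source (and of the Cibils--Rosso/Nichols circle of ideas the paper builds on): the $kG$-Hopf bimodule structure on $H_1$ together with the structure theorem for Hopf bimodules over a group algebra yields the ramification data, the path coalgebra with the Cibils--Rosso product is the cotensor coalgebra of that same Hopf bimodule, and multiplicativity of the embedding follows from the uniqueness clause of the cotensor coalgebra's universal property applied to $\iota\circ m_{H}$ and $m_{kQ(H)}\circ(\iota\otimes\iota)$, whose degree $0$ and degree $1$ corestrictions agree and kill the coradical of $H\otimes H$. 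The one assertion deserving more care is that the Chin--Montgomery embedding can be taken graded and equal to the identity in degrees $0$ and $1$; the cleanest repair is to construct $\iota$ itself from the existence clause of the same universal property (taking the grading projections onto $H_0$ and $H_1$ as the prescribed corestrictions), with injectivity supplied by the Heyneman--Radford criterion \cite{HR}, rather than quoting Lemma 2.1 as a black box.
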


Now let $C\subset k\circlearrowleft$ be a finite-dimensional large
sub coalgebra of $k\circlearrowleft$ and assume there is a Hopf
structure $H(C)$ on $C$.

\begin{proposition}
With notations and the assumption as above, there is a natural
number $i$ such that as a coalgebra, $$C\cong
k\circlearrowleft_{p^{i}}.$$
\end{proposition}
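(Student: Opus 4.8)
The plan is to separate the statement into two independent assertions: that $C$, as a coalgebra, is forced to be a truncation $k\circlearrowleft_{m}$ with $m=\dim_{k}C$, and that the existence of the Hopf structure $H(C)$ forces $m$ to be a power of $p$. Observe first that $C$ is cocommutative and irreducible: it is a sub coalgebra of the cocommutative $k\circlearrowleft$, and its only group-like is the vertex $\alpha_{0}$, the unique group-like of $k\circlearrowleft$, which lies in $C$ precisely because $C$ is large.

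For the coalgebra part, I would work with the coradical filtration. Since the length grading on $k\circlearrowleft$ realizes its coradical filtration, passing to associated graded objects embeds $\gr C$ as a graded sub coalgebra of $\gr(k\circlearrowleft)=k\circlearrowleft$. In each length $n$ the space $k\circlearrowleft$ is one-dimensional, spanned by $\alpha_{n}$, so $(\gr C)_{n}$ is $0$ or $k\alpha_{n}$; and the formula $\Delta(\alpha_{n})=\sum_{i=0}^{n}\alpha_{i}\otimes\alpha_{n-i}$ shows there can be no gaps, i.e. $\alpha_{n}\in\gr C$ forces $\alpha_{i}\in\gr C$ for all $i\leq n$. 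Hence $\gr C=k\circlearrowleft_{m}$ with $m=\dim_{k}C$. To upgrade this to $C\cong k\circlearrowleft_{m}$ itself I would dualize: since $C$ is cocommutative and irreducible, $C^{*}$ is a finite-dimensional local commutative algebra whose maximal ideal $\mathfrak{m}$ is the annihilator of $\alpha_{0}$, and the computation above gives $\dim_{k}\mathfrak{m}/\mathfrak{m}^{2}=\dim_{k}(C_{1}/C_{0})=1$ (using $\alpha_{1}\in C$). Thus $\mathfrak{m}$ is principal and $C^{*}$ is generated by a single nilpotent element, so $C^{*}\cong k[t]/(t^{m})$; dualizing back, and noting that $k[t]/(t^{m})$ is exactly the dual algebra of $k\circlearrowleft_{m}$, yields $C\cong k\circlearrowleft_{m}$ as coalgebras.

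For the Hopf part, one cannot apply the Hopf-quiver machinery to $H(C)$ directly, since $C$ need not be graded. Instead I would pass to $\gr H(C)$: the coradical filtration of a Hopf algebra is a Hopf algebra filtration (its coradical $k$ is a Hopf subalgebra), so $\gr H(C)$ is a coradically graded, cocommutative, irreducible Hopf algebra whose underlying coalgebra is $\gr C=k\circlearrowleft_{m}$. By Corollary 2.2 its dual Gabriel quiver has a single vertex, and since its degree-one part is one-dimensional this quiver is exactly the loop $\circlearrowleft$; Lemma 2.5 then provides a Hopf embedding $\gr H(C)\hookrightarrow k\circlearrowleft$. As $\gr H(C)$ is finite-dimensional, Lemma 2.4 gives $\gr H(C)\cong k\circlearrowleft_{p^{i}}$ for some $i$, whence $m=\dim_{k}\gr H(C)=p^{i}$. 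Combined with the coalgebra isomorphism $C\cong k\circlearrowleft_{m}$ from the previous step, this gives $C\cong k\circlearrowleft_{p^{i}}$.

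The main obstacle I anticipate is the passage from the associated graded back to $C$ itself: the Hopf-theoretic input (Corollary 2.2 and Lemmas 2.4 and 2.5) naturally controls only the coradically graded object $\gr H(C)$, so the force of the argument rests on the local-ring computation showing $C^{*}\cong k[t]/(t^{m})$ — that is, on the uniserial structure of $C$ — which guarantees $C\cong\gr C$ as coalgebras and lets the two halves be spliced together. Verifying that the coradical filtration of $k\circlearrowleft$ coincides with the length grading, and that $\gr H(C)$ is genuinely a coradically graded Hopf algebra, are the routine but essential technical points underpinning this step.
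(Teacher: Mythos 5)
Your proposal is correct, and its Hopf-theoretic core is exactly the paper's: pass to $\gr H(C)$, invoke the dual Gabriel theorem (Lemma 2.5) to Hopf-embed it into $k\circlearrowleft$, then apply Lemma 2.4 to conclude $\gr H(C)\cong k\circlearrowleft_{p^{i}}$. Where you genuinely differ is in the first half. The paper's proof ends at $\gr H(C)\cong k\circlearrowleft_{p^{i}}$ and declares ``what we want,'' silently identifying $C$ with $\gr C$; you isolate this as the real issue and close it with the dual local-ring argument: $C^{*}$ is finite-dimensional commutative local with $\dim_{k}\mathfrak{m}/\mathfrak{m}^{2}=\dim_{k}(C_{1}/C_{0})=1$, hence $C^{*}\cong k[t]/(t^{m})$ and $C\cong k\circlearrowleft_{m}$ as coalgebras. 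That bridge is a genuine gap in the paper's write-up, and your way of filling it is sound (the identification $\mathfrak{m}/\mathfrak{m}^{2}\cong(C_{1}/C_{0})^{*}$ uses the standard duality $C_{n}^{\perp}=\mathfrak{m}^{n+1}$ between the coradical filtration and the radical filtration). Two remarks. First, there is an even shorter route to the same bridge: a direct computation shows every sub coalgebra of $k\circlearrowleft$ is \emph{literally} a truncation --- if $c=\sum_{i\le n}c_{i}\alpha_{i}\in C$ with $c_{n}\neq 0$, applying $(f_{j}\otimes\mathrm{id})\Delta$ for the dual basis functionals $f_{j}$ and descending induction on $j$ puts every $\alpha_{i}$, $i\le n$, in $C$; thus $C=k\circlearrowleft_{m}$ on the nose and $C=\gr C$, with no dualization needed. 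Second, both you and the paper tacitly use that the loop carries a unique graded Hopf structure (the Cibils--Rosso structure with trivial bimodule action), so that the target of the Lemma 2.5 embedding really is the Hopf algebra $k\circlearrowleft=(k[x])^{\circ}$ to which Lemma 2.4 applies; this deserves a sentence in a careful write-up, though it is immediate from the one-dimensionality of $kQ_{0}$ and $kQ_{1}$.
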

\begin{proof} At first, we know that $H(C)$ is a pointed Hopf
algebra. Denote its coradical filtration by $\{ H(C)_n
\}_{n=0}^{\infty}.$ Define \[\operatorname{gr}(H(C))=H(C)_0 \oplus
H(C)_1/H(C)_0 \oplus H(C)_2/H(C)_1 \oplus \cdots \cdots \] as the
corresponding coradically graded version. Then
$\operatorname{gr}(H(C))$ inherits from $H(C)$ a coradically graded
Hopf algebra structure (see e.g. \cite{Mon}). By Lemma 2.5,
$\gr(H(C))$ is a sub Hopf algebra of $k\circlearrowleft$. Thus Lemma
2.4 implies what we want.
\end{proof}

Thus, our next aim is to give all possible Hopf structures (not
necessarily coradically graded) over the coalgebra
$k\circlearrowleft_{p^{i}}$.

For any rational number $a$, denote by $[a]$ the biggest integer
which is not bigger than $a$.
\begin{lemma} For any positive integers $m,n$, $\left (
\begin{array}{cc} m+n \\n
\end{array}\right)=0$ if and only if
$$\sum_{i\geq 1}[\frac{m+n}{p^{i}}]> \sum_{i\geq 1}[\frac{m}{p^{i}}]+ \sum_{i\geq 1}[\frac{n}{p^{i}}].$$
\end{lemma}
\begin{proof} Clear.
\end{proof}

 We call a Hopf algebra is
\emph{uniserial} if the set of its sub Hopf algebras forms a totally
ordered set under the containing relation.

 \textbf{Convention. } Let $C$ and $D$ be two coalgebras and assume that $C$ is a sub coalgebra of $D$.
  If there is a Hopf structure $H(D)$ over
$D$, then we use the notion $H(C)$ to denote the restriction, if
applicable, of the structure of $H(D)$ to $C$.

\begin{proposition} Let $n$ be a positive natural number and assume
that there is a Hopf structure $H(k\circlearrowleft_{p^{n}})$ over
$k\circlearrowleft_{p^{n}}$. Then $H(k\circlearrowleft_{p^{n}})$ is
a uniserial Hopf algebra with the composition series
$$k\subset H(k\circlearrowleft_{p^{1}})\subset \cdots\subset  H(k\circlearrowleft_{p^{i-1}})\subset
H(k\circlearrowleft_{p^{i}})\subset \cdots\subset
H(k\circlearrowleft_{p^{n}}).$$
\end{proposition}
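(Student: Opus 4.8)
The plan is to pass to the linear dual and exploit that the underlying coalgebra $k\circlearrowleft_{p^{n}}$ is cocommutative, so that $A:=H(k\circlearrowleft_{p^{n}})^{*}$ is a finite-dimensional \emph{commutative} Hopf algebra. First I would identify $A$ as an algebra: dualizing the divided-power comultiplication $\Delta(\alpha_{m})=\sum_{a+b=m}\alpha_{a}\otimes\alpha_{b}$, the dual basis element $t:=\alpha_{1}^{*}$ satisfies $t^{m}=\alpha_{m}^{*}$ and $t^{p^{n}}=0$, so $A\cong k[t]/(t^{p^{n}})$ as a $k$-algebra. Its ideals are exactly the $(t^{m})$, $0\le m\le p^{n}$, which form a chain; taking annihilators shows that the subcoalgebras of $k\circlearrowleft_{p^{n}}$ are precisely the terms $C_{m}=\bigoplus_{j=0}^{m}k\alpha_{j}$ of the coradical filtration, and that they are totally ordered by inclusion. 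Since every sub Hopf algebra is in particular a subcoalgebra, this already yields that the sub Hopf algebras of $H(k\circlearrowleft_{p^{n}})$ form a totally ordered set, i.e.\ uniseriality.

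Next I would pin down the possible dimensions. If $K$ is a nonzero finite-dimensional sub Hopf algebra then $K=C_{m}$ for some $m$, and $K$ is itself a Hopf structure over the large subcoalgebra $C_{m}$ of $k\circlearrowleft$; by the argument of Proposition 2.6 its coradically graded version $\gr(K)$ is a sub Hopf algebra of $k\circlearrowleft$ (using Corollary 2.2 and Lemma 2.5), whence Lemma 2.4 forces $\gr(K)\cong k\circlearrowleft_{p^{i}}$. Therefore $\dim K=\dim\gr(K)=p^{i}$, so $m+1=p^{i}$ and $K=k\circlearrowleft_{p^{i}}$. Thus every sub Hopf algebra occurs among the $k\circlearrowleft_{p^{i}}$, $0\le i\le n$.

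It remains to prove the converse, that each $k\circlearrowleft_{p^{i}}$ really is a sub Hopf algebra; this is the step I expect to be the crux, since $H(k\circlearrowleft_{p^{n}})$ need not be coradically graded and one cannot simply read its subalgebras off the graded structure. Here I would again work in $A$ and use the Frobenius endomorphism $F\colon A\to A,\ a\mapsto a^{p}$. Because $A$ is commutative, $F$ is an algebra map, and since $A\otimes A$ is commutative one checks $\Delta(a^{p})=\Delta(a)^{p}=(F\otimes F)\Delta(a)$ and $\e(a^{p})=\e(a)^{p}$, so $F$, and likewise each iterate $F^{n-i}$, is a Hopf algebra endomorphism. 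Consequently $\ker F^{n-i}=\{a: a^{p^{n-i}}=0\}$ is a Hopf ideal of $A$, and computing in $k[t]/(t^{p^{n}})$ gives $\ker F^{n-i}=(t^{p^{i}})$.

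Finally I would translate back. For finite-dimensional Hopf algebras, Hopf ideals of $A=H^{*}$ correspond via annihilators to sub Hopf algebras of $H$, and $(t^{p^{i}})^{\perp}=\{\,\sum c_{j}\alpha_{j} : c_{j}=0 \text{ for } j\ge p^{i}\,\}=\bigoplus_{j=0}^{p^{i}-1}k\alpha_{j}=k\circlearrowleft_{p^{i}}$, so each $k\circlearrowleft_{p^{i}}$ is indeed a sub Hopf algebra. Combining the three steps, the sub Hopf algebras of $H(k\circlearrowleft_{p^{n}})$ are exactly $k=k\circlearrowleft_{p^{0}}\subset k\circlearrowleft_{p^{1}}\subset\cdots\subset k\circlearrowleft_{p^{n}}$, a totally ordered family realizing the asserted composition series. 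The only genuine difficulty is the non-graded case flagged above, which the Frobenius-on-the-dual argument resolves cleanly; the remaining dualization and annihilator computations are routine.
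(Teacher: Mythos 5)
Your proof is correct, but it takes a genuinely different route from the paper's. The paper argues entirely on the quiver side: by Proposition 2.6 every sub Hopf algebra must be one of the subcoalgebras $k\circlearrowleft_{p^{i}}$ (a reduction you also use), and closure of these under multiplication is then deduced from the product rule (2.1) together with the carrying criterion for binomial coefficients (Lemma 2.7). That argument is very short, but---exactly as you anticipate---the product rule governs only the graded Hopf structure; for a possibly non-graded Hopf structure it controls just the top coradical-degree component of a product $\alpha_{a}\alpha_{b}$, so an additional filtration/induction step is left implicit in the paper. Your dual argument removes this difficulty: cocommutativity of the loop coalgebra makes $A=H^{*}$ a commutative Hopf algebra with $A\cong k[t]/(t^{p^{n}})$ as an algebra, each $\ker F^{n-i}=(t^{p^{i}})$ is a Hopf ideal, and its annihilator in $H$ is exactly $k\circlearrowleft_{p^{i}}$; this is insensitive to gradedness, uses no commutativity hypothesis on $H$ itself (matching the generality of the proposition), and simultaneously makes uniseriality transparent, since the ideals of $k[t]/(t^{p^{n}})$ form a chain. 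One point to polish: the Frobenius $a\mapsto a^{p}$ is $p$-semilinear rather than $k$-linear, so to invoke ``kernels of Hopf maps are Hopf ideals'' you should either regard $F^{n-i}$ as a $k$-linear Hopf map into the Frobenius twist of $A$ (harmless, since $k$ is algebraically closed, hence perfect), or verify directly in $A\otimes A\cong k[t,s]/(t^{p^{n}},s^{p^{n}})$ that every element whose $p^{n-i}$-th power vanishes lies in $(t^{p^{i}})\otimes A+A\otimes(t^{p^{i}})$; with your explicit presentation this is a one-line computation, after which your argument is complete.
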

\begin{proof} By Proposition 2.6, it is enough to show that
$H(k\circlearrowleft_{p^{i}})$ for $i\leq n$ are sub Hopf algebras.
Thus, it is enough to show that they are closed under the
multiplication. But this is the direct consequence of the product
rule (2.1) and Lemma 2.7.
\end{proof}

\section{endomorphisms of $k\circlearrowleft$}

For later use, we characterize all the possible endomorphisms of the
path coalgebra $k\circlearrowleft$ in this section.

\begin{theorem}
\emph{(i)} Let $f:\;k\circlearrowleft\rightarrow k\circlearrowleft$
be a coalgebra map, then there are $\{\lambda_{i}\in k|i\in
\mathbb{N}^{+}\}$ such that
$$f(\alpha_{n})=\sum_{r=1}^{n}(\sum_{n_{1}+\cdots n_{r}=n}\lambda_{n_{1}}\cdots\lambda_{n_{r}})\alpha_{r}$$
for any $n$.

\emph{(ii)} All coalgebra endomorphisms of $k\circlearrowleft$ are
precisely given in this way.
\end{theorem}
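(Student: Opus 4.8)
The plan is to repackage everything into a single generating series and to recognize that a coalgebra endomorphism is the same datum as one grouplike series. Work in $C[[s]]$, where $C=k\circlearrowleft$ and $s$ is a formal parameter, and extend $\Delta$, $\varepsilon$ and $f$ to be $k[[s]]$-linear. First I would record two basic facts. Since $e=\alpha_0$ is the unique group-like element of the irreducible coalgebra $k\circlearrowleft$ (it has one vertex) and coalgebra maps send group-likes to group-likes, we have $f(\alpha_0)=\alpha_0$. Since $f$ preserves the counit, $\varepsilon(f(\alpha_n))=\varepsilon(\alpha_n)=\delta_{n,0}$. Writing $f(\alpha_n)=\sum_{r\ge0}c_{n,r}\alpha_r$, the counit condition forces $c_{n,0}=\delta_{n,0}$.

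The crucial observation is that the total series $A(s):=\sum_{n\ge0}\alpha_n s^n$ is grouplike in $C[[s]]$: the comultiplication $\Delta(\alpha_n)=\sum_{a+b=n}\alpha_a\otimes\alpha_b$ gives $\sum_n\Delta(\alpha_n)s^n=A(s)\otimes A(s)$, while $\varepsilon(A(s))=1$. Setting $\Phi(s):=f(A(s))=\sum_n f(\alpha_n)s^n$, I would apply the coalgebra-map axiom $\Delta f=(f\otimes f)\Delta$ termwise to obtain $\Delta(\Phi(s))=(f\otimes f)(A(s)\otimes A(s))=\Phi(s)\otimes\Phi(s)$, so $\Phi(s)$ is again grouplike. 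Collecting the $\alpha_r$-coefficients gives $\Phi(s)=\sum_{r\ge0}\alpha_r G_r(s)$ with $G_r(s)=\sum_n c_{n,r}s^n$ and $G_0=1$; comparing the coefficients of the linearly independent tensors $\alpha_a\otimes\alpha_b$ on the two sides of $\Delta\Phi=\Phi\otimes\Phi$ then collapses the grouplike condition into the single multiplicative relation $G_{a+b}=G_aG_b$ for all $a,b\ge0$.

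This recursion solves itself: by induction $G_r=G_1^{\,r}$. Putting $L(s):=G_1(s)=\sum_i c_{i,1}s^i$ and $\lambda_i:=c_{i,1}$ (with no constant term, since $c_{0,1}=0$), one reads off $c_{n,r}=[s^n]L(s)^r=\sum_{n_1+\cdots+n_r=n}\lambda_{n_1}\cdots\lambda_{n_r}$, which is exactly the asserted formula; and since $L$ starts in degree $1$, we get $c_{n,r}=0$ for $r>n$, so each $f(\alpha_n)$ is the claimed finite sum $\sum_{r=1}^n c_{n,r}\alpha_r$. This settles (i). For (ii) I would simply reverse the argument: given arbitrary scalars $\lambda_i\in k$, set $L(s)=\sum_i\lambda_i s^i$, define $c_{n,r}:=[s^n]L^r$ and $f(\alpha_n):=\sum_r c_{n,r}\alpha_r$ (a finite sum by the same degree bound); then $\Phi(s)=\sum_r\alpha_r L^r$ satisfies $G_{a+b}=L^{a+b}=G_aG_b$, hence is grouplike, and unwinding this shows $f$ is a counital coalgebra map.

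The only genuine work is bookkeeping: justifying the $s$-linear extensions and the interchange of the infinite sums in $C[[s]]$ and $(C\otimes C)[[s]]$, and checking that $f$ lands in $C$ rather than merely in $C[[s]]$ — both of which follow from the degree bound $c_{n,r}=0$ for $r>n$. The conceptual point, that a coalgebra endomorphism is encoded by the single grouplike series $\Phi(s)$ and hence by the one power series $L(s)$, is precisely what reduces the apparent recursion $\Delta f=(f\otimes f)\Delta$ to the trivial identity $G_{a+b}=G_aG_b$; I expect no real obstacle beyond setting up this formalism.
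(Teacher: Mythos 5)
Your proof is correct, and it takes a genuinely different route from the paper's. The paper argues element-by-element and inductively: it uses that the primitive space of $k\circlearrowleft$ is $k\alpha_{1}$, shows at each stage that $f(\alpha_{n+1})-\sum_{r=2}^{n+1}(\cdots)\alpha_{r}$ is primitive (this produces $\lambda_{n+1}$), and along the way isolates a combinatorial identity, labelled $(\star)$, which is then reused to verify the converse (ii) by direct computation. You instead pass to the completion $C[[s]]$ and use group-likes rather than primitives: the total series $A(s)=\sum_{n}\alpha_{n}s^{n}$ is group-like, a coalgebra map sends it to another group-like series $\Phi(s)=\sum_{r}\alpha_{r}G_{r}(s)$, and linear independence of the tensors $\alpha_{a}\otimes\alpha_{b}$ turns the group-like condition into the functional equation $G_{a+b}=G_{a}G_{b}$, whence $G_{r}=G_{1}^{r}$ and $\lambda_{i}=[s^{i}]G_{1}$. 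The bookkeeping you flag (finiteness of coefficients of each $s^{n}$, and the bound $c_{n,r}=0$ for $r>n$ coming from $c_{0,1}=0$) does go through, since each $f(\alpha_{n})$ is a finite sum in $C$, so every coefficient comparison involves only finitely many terms. What your approach buys: parts (i) and (ii) become two directions of a single equivalence (``$f$ is a coalgebra map iff $\Phi$ is group-like iff $G_{a+b}=G_{a}G_{b}$''), so the converse is automatic instead of requiring the separate verification via $(\star)$; it also exposes the conceptual content that coalgebra endomorphisms of $k\circlearrowleft$ correspond to power series $L(s)$ with zero constant term (dually, continuous endomorphisms $x\mapsto L(x)$ of $k[[x]]$), which makes the paper's subsequent Proposition 3.2 (automorphism iff $\lambda_{1}\neq 0$) transparent as invertibility of $L$ under composition. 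What the paper's approach buys is elementarity: it never leaves the coalgebra $C$ itself and needs no completion formalism, at the cost of an explicit induction and the identity $(\star)$.
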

\begin{proof} (i) Let's find such $\lambda_{i}'$s. Since $f$ is a
coalgebra map, $f(1)$ is a group-like element and $f(\alpha_{1})$ is
a primitive element. Thus $f(1)=1$ and there is a $\lambda_{1}\in k$
such that $f(\alpha_{1})=\lambda_{1}\alpha_{1}$ since $k\alpha_{1}$
are all primitive elements. Suppose we have found
$\{\lambda_{1},\ldots,\lambda_{n}\}$ and let's find $\lambda_{n+1}$.
By $f$ is a coalgebra map,
$$\Delta(f(\alpha_{n+1})-\sum_{r=2}^{n+1}(\sum_{n_{1}+\cdots
n_{r}=n+1}\lambda_{n_{1}}\cdots\lambda_{n_{r}})\alpha_{r})\;\;\;\;\;\;\;\;\;\;\;\;\;\;\;\;\;\;
\;\;\;\;\;\;\;\;\;\;\;\;\;\;\;\;\;\;\;\;\;\;\;\;\;\;\;\;\;\;\;\;\;\;\;\;$$
\begin{eqnarray*}
&=&f(\alpha_{n+1})\otimes 1+ 1\otimes
f(\alpha_{n+1})+\sum_{i=1}^{n}f(\alpha_{i})\otimes
f(\alpha_{n+1-i})-\\
&\;&\sum_{r=2}^{n+1}(\sum_{n_{1}+\cdots
n_{r}=n+1}\lambda_{n_{1}}\cdots\lambda_{n_{r}})\sum_{s+t=r}\alpha_{s}\otimes \alpha_{t})\\
&=&f(\alpha_{n+1})\otimes 1+ 1\otimes
f(\alpha_{n+1})+\sum_{i=1}^{n}(\sum_{s=1}^{i}(\sum_{n_{1}+\cdots
n_{s}=i}\lambda_{n_{1}}\cdots\lambda_{n_{s}})\alpha_{s}\\
&\;&\otimes \sum_{t=1}^{n+1-i}(\sum_{m_{1}+\cdots
m_{t}=n+1-i}\lambda_{m_{1}}\cdots\lambda_{m_{t}})\alpha_{t})-\\
&\;&\sum_{r=2}^{n+1}(\sum_{n_{1}+\cdots
n_{r}=n+1}\lambda_{n_{1}}\cdots\lambda_{n_{r}})\sum_{s+t=r}\alpha_{s}\otimes
\alpha_{t}).
\end{eqnarray*}

Replace $\lambda_{m_{1}}\cdots\lambda_{m_{t}}$ by
$\lambda_{n_{s+1}}\cdots\lambda_{n_{s+t}}$ and set $r=s+t$, one can
find that
$$ \sum_{i=1}^{n}(\sum_{s=1}^{i}(\sum_{n_{1}+\cdots
n_{s}=i}\lambda_{n_{1}}\cdots\lambda_{n_{s}})\alpha_{s}\otimes
\sum_{t=1}^{n+1-i}(\sum_{m_{1}+\cdots
m_{t}=n+1-i}\lambda_{m_{1}}\cdots\lambda_{m_{t}})\alpha_{t})$$
$$=\sum_{r=2}^{n+1}(\sum_{n_{1}+\cdots
n_{r}=n+1}\lambda_{n_{1}}\cdots\lambda_{n_{r}})\sum_{s+t=r,s\neq
0\neq t}\alpha_{s}\otimes
\alpha_{t}).\;\;\;\;\;\;\;\;\;\;\;\;\;\;\;\;\;\;\;\;\;\;\;\;\;\;\;\;\;\;\;\;(\star)$$
Thus let $y:=f(\alpha_{n+1})-\sum_{r=2}^{n+1}(\sum_{n_{1}+\cdots
n_{r}=n+1}\lambda_{n_{1}}\cdots\lambda_{n_{r}})\alpha_{r}$, then
$\Delta(y)=y\otimes 1+1\otimes y$. Thus
$f(\alpha_{n+1})-\sum_{r=2}^{n+1}(\sum_{n_{1}+\cdots
n_{r}=n+1}\lambda_{n_{1}}\cdots\lambda_{n_{r}})\alpha_{r}$ is a
primitive element and so there is a $\lambda_{n+1}$ such that
$y=\lambda_{n+1}\alpha_{1}$. Equivalently,
$$f(\alpha_{n+1})=\sum_{r=1}^{n+1}(\sum_{n_{1}+\cdots n_{r}=n+1}\lambda_{n_{1}}\cdots\lambda_{n_{r}})\alpha_{r}.$$

(ii) By (i), it is enough to show that for any $\{\lambda_{i}\in
k|i\in \mathbb{N}^{+}\}$ and the linear map $f$ defined by
$f(\alpha_{n})=\sum_{r=1}^{n}(\sum_{n_{1}+\cdots
n_{r}=n}\lambda_{n_{1}}\cdots\lambda_{n_{r}})\alpha_{r}$ is indeed a
coalgebra map. In fact,
\begin{eqnarray*} (f\otimes f)\Delta(\alpha_{n})&=&f(\alpha_{n})\otimes 1+ 1\otimes
f(\alpha_{n})+\sum_{i=1}^{n-1}f(\alpha_{i})\otimes f(\alpha_{n-i})\\
&=&f(\alpha_{n})\otimes 1+ 1\otimes
f(\alpha_{n})+\sum_{i=1}^{n-1}(\sum_{s=1}^{i}(\sum_{n_{1}+\cdots
n_{s}=i}\lambda_{n_{1}}\cdots\lambda_{n_{s}})\alpha_{s}\\
&\;&\otimes \sum_{t=1}^{n-i}(\sum_{m_{1}+\cdots
m_{t}=n-i}\lambda_{m_{1}}\cdots\lambda_{m_{t}})\alpha_{t}).
\end{eqnarray*}
While
\begin{eqnarray*} \Delta(f(\alpha_{n}))&=&\Delta(\sum_{r=1}^{n}(\sum_{n_{1}+\cdots
n_{r}=n}\lambda_{n_{1}}\cdots\lambda_{n_{r}})\alpha_{r})\\
&=&\sum_{r=1}^{n}(\sum_{n_{1}+\cdots
n_{r}=n}\lambda_{n_{1}}\cdots\lambda_{n_{r}})\sum_{s+t=r}(\alpha_{s}\otimes
\alpha_{t})\\
&=&f(\alpha_{n})\otimes 1+1\otimes f(\alpha_{n})+
\sum_{r=2}^{n}(\sum_{n_{1}+\cdots
n_{r}=n}\lambda_{n_{1}}\cdots\lambda_{n_{r}})\\
&\;\;\;\;&\times \sum_{s+t=r,s\neq 0\neq t}(\alpha_{s}\otimes
\alpha_{t}).
\end{eqnarray*}
Then equation $(\star)$ implies that
$\Delta(f(\alpha_{n}))=(f\otimes f)\Delta(\alpha_{n})$.
\end{proof}

By the proof, we know that if $f:\;k\circlearrowleft\rightarrow
k\circlearrowleft$ is a coalgebra map, then
$f(\alpha_{1})=\lambda_{1}\alpha_{1}$ for some $\lambda_{1}\in k$.
The next result is to provide a criterion to determine when $f$ is
indeed an automorphism.

\begin{proposition} With notions as the above, $f$ is an
automorphism if and only if $\lambda_{1}\neq 0$.
\end{proposition}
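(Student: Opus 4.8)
The plan is to exploit the triangular shape of $f$ with respect to the length filtration. First I would record the two decisive features of the formula in Theorem 3.1: for each $n$, the element $f(\alpha_n)$ is a linear combination of $\alpha_1,\dots,\alpha_n$ only (no $\alpha_r$ with $r>n$ occurs), and the coefficient of the top term $\alpha_n$ is exactly $\lambda_1^n$, since the only composition $n_1+\cdots+n_n=n$ with all parts $\geq 1$ is the all-ones one. Together with $f(1)=1$, this shows that $f$ carries each finite-dimensional subspace $k\circlearrowleft_{N}=\mathrm{span}\{1,\alpha_1,\dots,\alpha_{N-1}\}$ into itself, and that on this subspace $f$ is represented, in the ordered basis $1,\alpha_1,\dots,\alpha_{N-1}$, by a triangular matrix with diagonal entries $1,\lambda_1,\lambda_1^2,\dots,\lambda_1^{N-1}$.

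For the easy implication I would argue by contrapositive: if $\lambda_1=0$ then $f(\alpha_1)=\lambda_1\alpha_1=0$, so $\alpha_1\in\ker f$ and $f$ cannot be injective, hence is not an automorphism.

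Conversely, suppose $\lambda_1\neq 0$. Then the determinant of the restriction $f|_{k\circlearrowleft_{N}}$ equals $\prod_{j=0}^{N-1}\lambda_1^{\,j}\neq 0$, so $f|_{k\circlearrowleft_{N}}$ is a bijection of $k\circlearrowleft_{N}$ for every $N$. Since every element of $k\circlearrowleft$ lies in some $k\circlearrowleft_{N}$ and $f$ preserves these subspaces, bijectivity on each finite piece assembles to bijectivity of $f$ on all of $k\circlearrowleft$: a vector killed by $f$ already lies in some $k\circlearrowleft_{N}$ and must be $0$, and any target lies in some $k\circlearrowleft_{N}$ and has a preimage there. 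Finally, the set-theoretic inverse $f^{-1}$ of a bijective coalgebra map is again a coalgebra map (apply $f^{-1}\otimes f^{-1}$ to $\Delta f=(f\otimes f)\Delta$, and use $\varepsilon f=\varepsilon$ to get $\varepsilon f^{-1}=\varepsilon$), so $f$ is a coalgebra automorphism.

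I do not expect a serious obstacle here: the whole argument rests on the triangularity observation, which is immediate from the explicit formula in Theorem 3.1. The only point requiring a little care is the passage from bijectivity on each finite-dimensional filtered piece $k\circlearrowleft_{N}$ to bijectivity on the infinite-dimensional $k\circlearrowleft$, which works precisely because $f$ respects the exhausting chain $k\circlearrowleft_{1}\subset k\circlearrowleft_{2}\subset\cdots$ and every vector has finite support.
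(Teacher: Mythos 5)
Your proof is correct, and it replaces the paper's key step with a more elementary one. Both arguments share the same skeleton: use the formula of Theorem 3.1 to see that $f$ preserves each finite-dimensional piece $k\circlearrowleft_{N}$, prove bijectivity there, and assemble along the exhausting chain $k\circlearrowleft_{1}\subset k\circlearrowleft_{2}\subset\cdots$. The difference lies in how bijectivity on each piece is obtained. The paper checks that $f|_{k\circlearrowleft_{2}}$ is injective when $\lambda_{1}\neq 0$ and then invokes the Heyneman--Radford theorem (a coalgebra map injective on the second term of the coradical filtration is injective), finishing with the fact that an injective endomorphism of a finite-dimensional space is bijective. You instead observe that the only composition of $n$ into $n$ positive parts is $(1,\dots,1)$, so the matrix of $f|_{k\circlearrowleft_{N}}$ in the basis $1,\alpha_{1},\dots,\alpha_{N-1}$ is triangular with diagonal $1,\lambda_{1},\dots,\lambda_{1}^{N-1}$, hence invertible exactly when $\lambda_{1}\neq 0$. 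Your route is self-contained linear algebra and avoids citing \cite{HR}; it works here precisely because Theorem 3.1 gives an explicit formula, whereas the Heyneman--Radford argument is the one that generalizes to situations where one only controls a coalgebra map on the low terms of the coradical filtration. You also gain a small point of rigor that the paper glosses over: you verify explicitly that the set-theoretic inverse of a bijective coalgebra map is again a coalgebra map, so that ``bijective'' really does yield ``automorphism'' in the category of coalgebras.
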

\begin{proof} By Theorem 3.1, $f(k\circlearrowleft_{n})\subseteq
k\circlearrowleft_{n}$ for any $n\in \mathbb{N}^{+}$ and thus
$f|_{k\circlearrowleft_{n}}$ is a coalgebra endomorphism of
$k\circlearrowleft_{n}$. By $\lambda_{1}\neq 0$,
$f|_{k\circlearrowleft_{2}}$ is injective and so
$f|_{k\circlearrowleft_{n}}$ is injective by Heynaman-Radford's
result \cite{HR}. Since dim$_{k}k\circlearrowleft_{n}<\infty$,
$f|_{k\circlearrowleft_{n}}$ is bijective. This indeed implies that
$f$ is an automorphism of $k\circlearrowleft$. The converse is
obvious since one always has $f(\alpha_{1})=\lambda_{1}\alpha_{1}.$
\end{proof}

\begin{corollary} For any natural numbers $m>n>0$ and assume that $f$ is an automorphism
of the coalgebra $k\circlearrowleft_{n}$, then $f$ can be extended
to be automorphisms of the coalgebra $k\circlearrowleft$ and
$k\circlearrowleft_{m}$.
\end{corollary}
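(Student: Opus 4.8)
The plan is to reduce everything to the parametrization of coalgebra endomorphisms supplied by Theorem 3.1 together with the automorphism criterion of Proposition 3.2. First I would observe that the inductive argument in the proof of Theorem 3.1 applies verbatim to the truncated coalgebra $k\circlearrowleft_{n}$: since the primitive elements of $k\circlearrowleft_{n}$ are still exactly $k\alpha_{1}$ (recall $m>n>0$, so $n\geq 1$ and $m\geq 2$), the same primitivity step yields scalars $\lambda_{1},\dots,\lambda_{n-1}\in k$ with
$$f(\alpha_{r})=\sum_{s=1}^{r}\Big(\sum_{n_{1}+\cdots+n_{s}=r}\lambda_{n_{1}}\cdots\lambda_{n_{s}}\Big)\alpha_{s},\qquad 1\leq r\leq n-1.$$
As in Proposition 3.2, the hypothesis that $f$ is an automorphism forces $\lambda_{1}\neq 0$: one always has $f(\alpha_{1})=\lambda_{1}\alpha_{1}$, so $\lambda_{1}=0$ would destroy injectivity.

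Next I would extend this finite parameter list to an infinite one in the crudest possible way, setting $\lambda_{i}:=0$ for all $i\geq n$, and define $\tilde{f}\colon k\circlearrowleft\to k\circlearrowleft$ by the formula of Theorem 3.1(i) applied to the sequence $(\lambda_{1},\dots,\lambda_{n-1},0,0,\dots)$. By Theorem 3.1(ii) this $\tilde{f}$ is a genuine coalgebra endomorphism of $k\circlearrowleft$, and since $\lambda_{1}\neq 0$, Proposition 3.2 guarantees that $\tilde{f}$ is an automorphism of $k\circlearrowleft$. Because the defining formula expresses $\tilde{f}(\alpha_{r})$ using only $\lambda_{1},\dots,\lambda_{r}$, for $r\leq n-1$ it agrees with $f(\alpha_{r})$; hence $\tilde{f}$ restricts to $f$ on $k\circlearrowleft_{n}$, giving the desired automorphism of $k\circlearrowleft$.

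Finally, since $\tilde{f}(\alpha_{r})\in k\alpha_{1}\oplus\cdots\oplus k\alpha_{r}$, the automorphism $\tilde{f}$ preserves the length filtration and therefore maps $k\circlearrowleft_{m}$ into itself. As $n<m$ gives $k\circlearrowleft_{n}\subseteq k\circlearrowleft_{m}$, the restriction $\tilde{f}|_{k\circlearrowleft_{m}}$ is a coalgebra endomorphism extending $f$; it is injective, being a restriction of the injective map $\tilde{f}$, and hence bijective by finite-dimensionality. This produces the required automorphism of $k\circlearrowleft_{m}$.

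I expect the only point needing genuine care is the very first step, namely checking that the inductive construction of Theorem 3.1 really does go through for the finite coalgebra $k\circlearrowleft_{n}$ rather than for $k\circlearrowleft$ itself. Everything afterwards is bookkeeping, but this verification is what legitimately converts the given automorphism $f$ into a usable parameter sequence; once one notes that the space of primitives is unchanged under truncation, the obstacle dissolves.
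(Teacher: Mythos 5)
Your proof is correct and follows essentially the same route as the paper's: both extract the scalars $\lambda_{1},\dots,\lambda_{n-1}$ from the inductive argument of Theorem 3.1, pad the sequence with zeros, invoke Theorem 3.1(ii) and Proposition 3.2 (via $\lambda_{1}\neq 0$) to obtain an automorphism of $k\circlearrowleft$ restricting to $f$, and then restrict it to $k\circlearrowleft_{m}$. Your write-up is in fact slightly more careful than the paper's, since you explicitly check that the induction of Theorem 3.1 goes through in the truncated coalgebra (the primitives are unchanged) and that the restriction to $k\circlearrowleft_{m}$ is bijective by injectivity plus finite-dimensionality.
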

\begin{proof} By the proof of Theorem 3.1, there are
$\{\lambda_{1},\ldots,\lambda_{n-1}\}$ such that
$$f(\alpha_{i})=\sum_{r=1}^{i}(\sum_{n_{1}+\cdots
n_{r}=n}\lambda_{n_{1}}\cdots\lambda_{n_{r}})\alpha_{r}$$ for $1\leq
i\leq n-1$. By setting $\lambda_{j}=0$ for all $j\geq n$ and by
Theorem 3.1, if we define a map $F:\;k\circlearrowleft\rightarrow
k\circlearrowleft$ through
$$F(\alpha_{l})=\sum_{r=1}^{l}(\sum_{n_{1}+\cdots n_{r}=l}\lambda_{n_{1}}\cdots\lambda_{n_{r}})\alpha_{r}$$
for any natural number $l$, then $F$ is a coalgebra endomorphism of
$k\circlearrowleft$. Clearly, $F|_{k\circlearrowleft_{n}}=f$. Owning
to Proposition 3.2, $F$ is an automorphism. Theorem 3.1 deduces that
$F({k\circlearrowleft_{m}})\subseteq k\circlearrowleft_{m}$ and thus
$F|_{k\circlearrowleft_{m}}$ is the extension of $f$ to
$k\circlearrowleft_{m}$.
\end{proof}

\section{Hopf structures on $k\circlearrowleft_{p}$}

The following result seems well-known and we write its proof out for
completeness.

\begin{lemma} Let $H$ be a Hopf structure over
$k\circlearrowleft_{p^{1}}$, then as a Hopf algebra $H$ is
isomorphic to either $(k\mathbb{Z}_{p})^{\ast}$, dual of the group
algebra $k\mathbb{Z}_{p}$, or $k[x]/(x^{p})$.
\end{lemma}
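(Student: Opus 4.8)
The plan is to show that any such $H$ is forced to be commutative and generated by its unique nontrivial primitive $\alpha_1$, subject to a single additive relation, and then to normalize that relation using the coalgebra automorphisms of Section 3. Write $C=k\circlearrowleft_{p^1}$ with coalgebra basis $1=\alpha_0,\alpha_1,\dots,\alpha_{p-1}$ and $\Delta(\alpha_n)=\sum_{i+j=n}\alpha_i\otimes\alpha_j$. Since $C$ is connected (its unique group-like is $1$), the group-like $1$ must be the unit of $H$, and a short computation shows that the primitive elements of $C$ are exactly the scalar multiples of $\alpha_1$. Because $C$ is connected and finite-dimensional, an antipode is automatic once a bialgebra structure is in place, so I only need to pin down the multiplication.

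First I would pass to the associated graded Hopf algebra $\gr(H)$ for the coradical filtration $H_n=\operatorname{span}(\alpha_0,\dots,\alpha_n)$; this is legitimate because the coradical filtration of a Hopf algebra is an algebra filtration. By Lemma 2.5, $\gr(H)$ embeds as a graded sub Hopf algebra of $k\circlearrowleft$, and being $p$-dimensional it must equal $k\circlearrowleft_{p^1}$ by Lemma 2.4; hence its product is the divided power product $\bar\alpha_m\cdot\bar\alpha_n=\binom{m+n}{n}\bar\alpha_{m+n}$ of $(2.2)$. In particular $\bar\alpha_1^{\,n}=n!\,\bar\alpha_n\neq0$ for $n<p$, so $\{1,\alpha_1,\dots,\alpha_1^{p-1}\}$ is a basis of $H$. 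Consequently $H=k[\alpha_1]$ is commutative of dimension $p$, that is, $H\cong k[x]/(f)$ with $x=\alpha_1$ primitive and $f$ monic of degree exactly $p$.

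Next I would identify $f$. Since $x$ is primitive, $x\mapsto\alpha_1$ extends to a surjection of Hopf algebras $k[x]\To H$ from the polynomial Hopf algebra (with $x$ primitive), and its kernel $(f)$ is a Hopf ideal. The crux is that such a Hopf ideal is generated by an additive ($p$-)polynomial, which forces $f(x)=x^p-\beta x$ for some $\beta\in k$. Concretely, writing the relation as $x^p=\sum_{i<p}c_i\,x^i$ in $H$ and applying $\Delta$, the identity $(x\otimes1+1\otimes x)^p=x^p\otimes1+1\otimes x^p$ together with linear independence of the $x^a\otimes x^b$ forces every $c_i$ with $i\neq1$ to vanish. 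I expect this additivity step to be the main obstacle, since it is precisely the classification of the order-$p$ subgroup schemes of the additive group.

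Finally I would classify the parameter $\beta$ up to isomorphism. If $\beta=0$ then $H\cong k[x]/(x^p)$, the first case. If $\beta\neq0$, rescaling $x\mapsto\lambda x$ --- one of the coalgebra automorphisms of Theorem 3.1 (take $\lambda_1=\lambda$ and $\lambda_j=0$ for $j\geq2$, so that $\alpha_n\mapsto\lambda^n\alpha_n$), hence a Hopf isomorphism --- sends $x^p-\beta x$ to a scalar multiple of $x^p-\beta\lambda^{1-p}x$; choosing $\lambda$ with $\lambda^{p-1}=\beta$ (possible as $k$ is algebraically closed) normalizes $\beta=1$, so $H\cong k[x]/(x^p-x)$. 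Since $x^p-x=\prod_{j\in\mathbb{F}_p}(x-j)$ and $x$ is primitive, this is the Artin--Schreier presentation of the constant group scheme $\mathbb{Z}/p$, i.e. $H\cong(k\mathbb{Z}_p)^{\ast}$, giving the second case. As a cross-check one may dualize: $H^{\ast}$ is a local commutative Hopf algebra of dimension $p$, and the only two isomorphism types of such are $k\mathbb{Z}_p$ and $k[x]/(x^p)$ (with $x$ primitive), matching the two cases above; note moreover that these two are non-isomorphic, the former being semisimple and the latter local.
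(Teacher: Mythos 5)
Your proposal is correct and follows essentially the same route as the paper: the core step in both is that $\alpha_1^p$ is primitive (since $(\alpha_1\otimes 1+1\otimes\alpha_1)^p=\alpha_1^p\otimes 1+1\otimes\alpha_1^p$ in characteristic $p$), hence a scalar multiple of $\alpha_1$, and the scalar is then normalized to $0$ or $1$ by rescaling the generator. The only real difference is that you carefully justify the generation claim $H=k[\alpha_1]$ via the associated graded Hopf algebra together with Lemmas 2.4 and 2.5, a step the paper's proof dismisses as ``not hard to see.''
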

\begin{proof} At first, it is not hard to see that $H$ is generated
by $\alpha_{p^{0}}=\alpha_{1}$. Consider the element
$\alpha_{1}^{p}$. For it, we have
$$\Delta(\alpha_{1}^{p})=\Delta(\alpha_{1})^{p}=(1\otimes \alpha_{1}+ \alpha_{1}\otimes 1)^{p}=1\otimes \alpha_{1}^{p}
+\alpha_{1}^{p}\otimes 1. $$ Thus $\alpha_{1}^{p}$ is a primitive
element. Since the space spanned by $\alpha_{1}$ are all primitive
elements in the coalgebra $k\circlearrowleft_{p^{1}}$, there is a
$\lambda\in k$ such that
$$\alpha_{1}^{p}=\lambda \alpha_{1}.$$
If $\lambda=0$, then $H\cong k[x]/(x^{p})$. If $\lambda\neq 0$, take
$\lambda'$ to be a solution of the equation $\lambda x^{p}-x=0$.
Then
$$(\lambda'\alpha_{1})^{p}=\lambda\lambda'^{p}\alpha_{1}^{p}=\lambda'\alpha_{1}.$$
In one word, if $\lambda\neq 0$, we can always assume that
$\lambda=1$ and thus $H\cong (k\mathbb{Z}_{p})^{\ast}$.
\end{proof}

We find that the property of $H(k\circlearrowleft_{p^{n}})$ is
largely determined by that of $H(k\circlearrowleft_{p^{1}})$.

\begin{proposition}  Let $n$ be a positive integer and assume
that there is a commutative Hopf structure
$H(k\circlearrowleft_{p^{n}})$ over $k\circlearrowleft_{p^{n}}$. If
$H(k\circlearrowleft_{p^{1}})\cong (k\mathbb{Z}_{p})^{\ast}$, then
$$H(k\circlearrowleft_{p^{n}})\cong (k\mathbb{Z}_{p^{n}})^{\ast}.$$
\end{proposition}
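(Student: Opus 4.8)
The plan is to identify $H$ with the function algebra of a finite commutative group scheme and to read off its structure from the uniserial property of Proposition 2.8. Since the coalgebra $k\circlearrowleft_{p^{n}}$ is cocommutative (its comultiplication is symmetric), any Hopf structure $H$ on it is both commutative and cocommutative, so $\mathcal{G}:=\Spec H$ is a finite commutative group scheme over $k$ of order $\dim_{k}H=p^{n}$. I want to prove two things: that $\mathcal{G}$ is \'etale, and that the associated abelian group is cyclic; together these give $H\cong(k\mathbb{Z}_{p^{n}})^{\ast}$.

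First I would establish that $H$ is reduced. Over the perfect field $k$ the connected--\'etale sequence of $\mathcal{G}$ splits, yielding a Hopf algebra decomposition $H\cong \mathcal{O}(\pi_{0}(\mathcal{G}))\otimes \mathcal{O}(\mathcal{G}^{0})$ with $\mathcal{O}(\pi_{0})$ \'etale and $\mathcal{O}(\mathcal{G}^{0})$ local. If both tensor factors had dimension $>1$, then $\mathcal{O}(\pi_{0})\otimes k$ and $k\otimes\mathcal{O}(\mathcal{G}^{0})$ would be two sub Hopf algebras meeting only in $k$, hence incomparable, contradicting the uniserial conclusion of Proposition 2.8. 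Now the hypothesis $H(k\circlearrowleft_{p^{1}})\cong(k\mathbb{Z}_{p})^{\ast}$ provides an \'etale (reduced) sub Hopf algebra of dimension $p$; it injects into $H_{\mathrm{red}}=\mathcal{O}(\pi_{0})$ because its intersection with the nilradical of $H$ is trivial, so $\dim \mathcal{O}(\pi_{0})\geq p>1$. Therefore the local factor must be trivial, $\mathcal{G}^{0}=1$, and $H$ is \'etale. This reducedness step is the main obstacle: everything afterwards is formal, whereas ruling out an infinitesimal part is exactly where the hypothesis on the restriction to $k\circlearrowleft_{p^{1}}$ (via Lemma 4.1) has to be used.

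Once $H$ is \'etale, $H\cong k^{A}$ for a finite abelian group $A$ with $|A|=p^{n}$, i.e. $H\cong(kA)^{\ast}$. Sub Hopf algebras of $(kA)^{\ast}$ correspond order-reversingly to subgroups of $A$ (a subgroup $N\leq A$ giving the sub Hopf algebra $(k[A/N])^{\ast}$), so the uniserial property of Proposition 2.8 forces the subgroup lattice of $A$ to be totally ordered. A finite $p$-group whose subgroup lattice is a chain is cyclic, hence $A\cong\mathbb{Z}_{p^{n}}$ and $H\cong(k\mathbb{Z}_{p^{n}})^{\ast}$, as claimed. An alternative, more computational route would be to induct on $n$, using $\alpha_{1}^{p}=\alpha_{1}$ to split off the orthogonal idempotents of the \'etale sub Hopf algebra $H(k\circlearrowleft_{p^{1}})$ and then to analyze the resulting grading of $H$, but the group-scheme argument above seems cleaner and isolates the one genuinely nontrivial point, namely reducedness.
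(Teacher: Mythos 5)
Your proof is correct, but it follows a genuinely different route from the paper's. The paper works entirely inside the quiver basis: by induction on $l$ it normalizes, via the coalgebra automorphisms of Section 3 and roots of the Artin--Schreier equations $x^{p}-x+\lambda=0$, so that $\alpha_{l}^{p}=\alpha_{l}$ for all $0<l<p^{n}$; it then exhibits the explicit element $t=\prod_{0<l<p^{n}}(1-\alpha_{l}^{p-1})$ as an integral with $\varepsilon(t)=1\neq 0$, so that the Hopf-algebraic Maschke theorem (Theorem 2.2.1 of \cite{Mon}) yields semisimplicity, whence $H\cong (kG)^{\ast}$ for a finite abelian group $G$, and $G$ is cyclic because $H$ is cogenerated by $\alpha_{1}$. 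You instead invoke the structure theory of finite commutative group schemes: the connected--\'etale splitting over the perfect field $k$, combined with uniseriality (Proposition 2.8), shows that the \'etale and infinitesimal factors cannot both be nontrivial; the hypothesis on $H(k\circlearrowleft_{p^{1}})$ supplies a $p$-dimensional reduced sub Hopf algebra, which kills the purely infinitesimal alternative; and uniseriality once more (in place of the paper's cogeneration-by-$\alpha_{1}$ argument) forces the subgroup lattice of the resulting constant group to be a chain, hence the group is cyclic. Both of your appeals to Proposition 2.8 are legitimate, since the paper's notion of uniserial really is that \emph{all} sub Hopf algebras are totally ordered (dually, the ideals of $(k\circlearrowleft_{p^{n}})^{\ast}\cong k[x]/(x^{p^{n}})$ form a chain, so every subcoalgebra is some $k\circlearrowleft_{m}$). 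As for what each approach buys: the paper's computation is elementary and self-contained, imports only Montgomery's integral criterion, and the normalization $\alpha_{l}^{p}=\alpha_{l}$ is concretely visible in the quiver basis; your argument is shorter and conceptually transparent about exactly where the hypothesis enters (ruling out $\mathcal{G}^{0}$), and it is essentially the structural proof anticipated in Remark 4.3, where Masuoka observes that the proposition can be deduced from \cite{DG} or \cite{Mas}.
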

\begin{proof} \textbf{Claim. } \emph{Up to a Hopf isomorphism, $\alpha_{l}^{p}=\alpha_{l}$ for $0<l<p^{n}$. }
We prove this fact by using induction on $l$. If $l=1$, this is just
assumption. Assume that $\alpha_{l}^{p}=\alpha_{l}$ for $l\leq m-1$,
let's prove that $\alpha_{m}^{p}=\alpha_{{m}}.$ By the definition of
path coalgebra and the assumption of commutativity, we always have
\begin{eqnarray*}\Delta(\alpha_{{m}}^{p})&=&(1\otimes \alpha_{{m}}+\alpha_{{m}}\otimes 1+\sum_{0<l<{m}}\alpha_{l}\otimes
\alpha_{{m}-l})^{p}\\
&=&1\otimes \alpha_{{m}}^{p}+\alpha_{{m}}^{p}\otimes
1+\sum_{0<l<{m}}\alpha_{l}^{p}\otimes \alpha_{{m}-l}^{p}.
\end{eqnarray*}
The inductive assumption implies that $\alpha_{l}^{p}=\alpha_{l}$
for $l<{m}$. Thus
$$\Delta(\alpha_{{m}}^{p})=1\otimes \alpha_{{m}}^{p}+\alpha_{{m}}^{p}\otimes
1+\sum_{0<l<{m}}\alpha_{l}\otimes \alpha_{{m}-l}$$ and so
$$\Delta(\alpha_{{m}}^{p}-\alpha_{{m}})=(\alpha_{{m}}^{p}-\alpha_{{m}})\otimes 1+1\otimes (\alpha_{{m}}^{p}-\alpha_{{m}}).$$
Therefore, there is $\lambda\in k$ such that
$\alpha_{{m}}^{p}-\alpha_{{m}}=\lambda\alpha_{1}$. If $\lambda=0$,
done. If $\lambda\neq 0$, take $\xi$ to be a solution of the
equation $x^{p}-x+\lambda=0$ and let
$\alpha_{{m}}':=\alpha_{{m}}+\xi\alpha_{1}$. Clearly, the map
$$f:\;k\circlearrowleft_{m+1}\rightarrow k\circlearrowleft_{m+1},\;\;\alpha_{i}\mapsto \alpha_{i}\;\;\textrm{for}\;i\neq
m;\;\;\alpha_{m}\mapsto\alpha_{m}'
$$ is an automorphism of $k\circlearrowleft_{m+1}$. Corollary 3.3 implies $f$ can be extended
to be an automorphism of $k \circlearrowleft_{p^{n}}$. Since this
automorphism is equivalent to choose a new basis of
$k\circlearrowleft_{p^{n}}$, $f$ is an automorphism of Hopf algebras
of $H(k\circlearrowleft_{p^{n}})$. Now
$$(\alpha_{p^{m}}')^{p}=(\alpha_{p^{m}}+\xi\alpha_{1})^{p}=\alpha_{p^{m}}+\lambda\alpha_{1}+\xi^{p}\alpha_{1}
=\alpha_{p^{m}}+\xi\alpha_{1}=\alpha_{p^{m}}'.$$ The claim is
proved.

 Construct the element
$$t:=\prod_{0<l<p^{n}}(1-\alpha_{l}^{p-1}).$$
Thus the claim implies for any $0<m<p^{n}$,
$$\alpha_{m}t=0=\varepsilon(\alpha_{m})t.$$ This means that $t\in
\int_{H}$, the set of integrals. Since $\varepsilon(t)=1\neq 0$,
$H(k\circlearrowleft_{p^{n}})$ is a simisimple Hopf algebra (Theorem
2.2.1 in \cite{Mon}). Thus $H(k\circlearrowleft_{p^{n}})\cong
(kG)^{\ast}$ for some finite abelian group. Since
$H(k\circlearrowleft_{p^{n}})$ is cogenerated by $\alpha_{1}$, $kG$
is generated by one element. Thus $G\cong \mathbb{Z}_{p^{n}}$.
\end{proof}

\begin{remark} We would like to thank Professor A. Masuoka for
pointing out to us that the above proposition can be deduced from
Chapter IV, Section 3, 3.4 of \cite{DG} or Theorem 0.1 in
\cite{Mas}.
\end{remark}

Recall an affine algebraic group $\mathcal{G}$ is \emph{finite} if
its coordinate ring $\mathcal{O(G)}$ is a finite-dimensional Hopf
algebra. A finite algebraic group $\mathcal{G}$ is called
\emph{infinitesimal} if $\mathcal{O(G)}$ is a local algebra. And, we
call a finite algebraic group $\mathcal{G}$ \emph{unipotent} if its
distribution algebra $\mathcal{H(G)}:=(\mathcal{O(G)})^{\ast}$ is a
local algebra. There is an equivalence between the category of
finite algebraic groups and the category of finite-dimensional
cocommutative Hopf algebras. Explicitly, sending finite algebraic
group $\mathcal{G}$ to $\mathcal{H(G)}$ gives us the equivalence.
For more knowledge about affine algebraic groups, see \cite{DG,Wat}.

Now assume that there is a commutative Hopf structure
$H(k\circlearrowleft_{p^{n}})$ over $k\circlearrowleft_{p^{n}}$,
then there is a finite algebraic group $\mathcal{G}_{p^{n}}$ such
that
$$\mathcal{H}(\mathcal{G}_{p^{n}})=H(k\circlearrowleft_{p^{n}}).$$

\begin{proposition} Keep the above notations. If $H(k\circlearrowleft_{p})\cong
k[x]/(x^{p})$, then $\mathcal{G}_{p^{n}}$ is an infinitesimal
unipotent group.
\end{proposition}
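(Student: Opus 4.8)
The plan is to unwind both assertions through the stated dictionary. Since $\mathcal{H}(\mathcal{G}_{p^{n}})=H(k\circlearrowleft_{p^{n}})=:H$, we have $\mathcal{O}(\mathcal{G}_{p^{n}})=H^{\ast}$, so $\mathcal{G}_{p^{n}}$ is infinitesimal precisely when $H^{\ast}$ is local and unipotent precisely when $H$ is local. The infinitesimal half costs nothing and does not even use the hypothesis: as a coalgebra $H$ is $k\circlearrowleft_{p^{n}}$, which is connected (its coradical is $k1$), so the Jacobson radical $\mathrm{rad}(H^{\ast})=(k1)^{\perp}$ has codimension one and $H^{\ast}$ is local; concretely, the coalgebra structure alone forces $H^{\ast}\cong k[T]/(T^{p^{n}})$. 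So everything reduces to proving that $H$ itself is a local algebra.

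To see that $H$ is local I would show that its augmentation ideal $H^{+}=\ker\varepsilon=\mathrm{span}\{\alpha_{1},\dots,\alpha_{p^{n}-1}\}$ is nilpotent; being a nil ideal of codimension one it is then the radical, and $H$ is local. First I translate the hypothesis into a single relation. By Proposition 2.8 the coalgebra $k\circlearrowleft_{p}$ is a sub Hopf algebra, so $\alpha_{1}^{p}\in H(k\circlearrowleft_{p})$ and, being primitive, equals $\lambda\alpha_{1}$ for some $\lambda\in k$; Lemma 4.1 identifies the case $H(k\circlearrowleft_{p})\cong k[x]/(x^{p})$ with $\lambda=0$, that is, $\alpha_{1}^{p}=0$ in $H$.

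The device is the Frobenius $F\colon H\to H$, $F(x)=x^{p}$. Because $H$ is commutative of characteristic $p$, one has $\Delta(x^{p})=\Delta(x)^{p}=(F\otimes F)\Delta(x)$, so $F$ is a coalgebra endomorphism of $k\circlearrowleft_{p^{n}}$. Hence the computation of Theorem 3.1 applies and yields $F(\alpha_{m})=\sum_{r=1}^{m}\bigl(\sum_{n_{1}+\cdots+n_{r}=m}\lambda_{n_{1}}\cdots\lambda_{n_{r}}\bigr)\alpha_{r}$, where $\lambda_{1}$ is read off from $F(\alpha_{1})=\alpha_{1}^{p}=\lambda_{1}\alpha_{1}$, so $\lambda_{1}=0$ by the previous step. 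With $\lambda_{1}=0$ every surviving monomial has all $n_{i}\geq 2$, whence $m=\sum n_{i}\geq 2r$ and only indices $r\leq[\frac{m}{2}]$ occur. Thus $F$ carries $\mathrm{span}\{\alpha_{1},\dots,\alpha_{m}\}$ into $\mathrm{span}\{\alpha_{1},\dots,\alpha_{[m/2]}\}$, i.e. $F$ halves the path-length support on $H^{+}$. Iterating, $F^{k}(H^{+})\subseteq\mathrm{span}\{\alpha_{1},\dots,\alpha_{[(p^{n}-1)/2^{k}]}\}$, which is $0$ as soon as $2^{k}>p^{n}-1$; equivalently $x^{p^{k}}=0$ for every $x\in H^{+}$. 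Therefore $H^{+}$ is nil, hence nilpotent, $H$ is local, and $\mathcal{G}_{p^{n}}$ is unipotent. Together with the infinitesimal half this proves the proposition.

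The point where care is needed is that $H$ is a priori only a filtered, not graded, Hopf algebra, so one cannot argue by a crude degree count: the elements $\alpha_{m}^{p}$ may carry lower-length terms, and in the companion case $\alpha_{1}^{p}=\alpha_{1}$ (where $H\cong(k\Z_{p^{n}})^{\ast}$ is semisimple) $H$ is very far from local. What rescues the argument is reading the $p$-power map as a coalgebra endomorphism and feeding it into Theorem 3.1: the single hypothesis $\lambda_{1}=0$ is exactly what turns $F$ into a length-contracting operator, and it is this contraction, rather than any gradedness, that forces the nilpotency of $H^{+}$.
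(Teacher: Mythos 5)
Your proof is correct in substance, but it takes a genuinely different route from the paper's. The paper proves unipotence through the uniserial structure: by Proposition 2.8 one has the chain $H(k\circlearrowleft_{p})\subset\cdots\subset H(k\circlearrowleft_{p^{n}})$, Lemma 4.1 forces each layer $H(k\circlearrowleft_{p^{i}})/H(k\circlearrowleft_{p^{i-1}})^{+}H(k\circlearrowleft_{p^{i}})$ to be either $(k\mathbb{Z}_{p})^{\ast}$ or $k[x]/(x^{p})$, and a minimal-counterexample argument eliminates the first alternative: if $i\geq 2$ is minimal with semisimple layer, then $\alpha_{p^{i-1}}^{p^{i-1}}=\alpha_{p^{i-1}}+a$ with $a\in k\circlearrowleft_{p^{i-1}}$ is shown to be primitive (using that all shorter paths are already nilpotent by minimality), forcing $\alpha_{p^{i-1}}\in k\circlearrowleft_{p^{i-1}}$, a contradiction. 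You instead make the $p$-power map the protagonist: feeding it into the computation of Theorem 3.1 with $\lambda_{1}=0$ shows it halves path-length support, so iterating annihilates $H^{+}$; hence $H^{+}$ is nil and $H$ is local. Your argument is more direct, avoids the layer-by-layer dichotomy and the contradiction entirely, gives an explicit nilpotency bound ($x^{p^{k}}=0$ once $2^{k}>p^{n}-1$), and is close in spirit to the paper's own use of the Frobenius later, in the proof of Theorem 5.1.

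One point needs patching: $F(x)=x^{p}$ is not literally a coalgebra endomorphism in the sense of Theorem 3.1, because it is only $p$-semilinear ($F(\lambda x)=\lambda^{p}F(x)$) and satisfies $\varepsilon\circ F=(\,\cdot\,)^{p}\circ\varepsilon$ rather than $\varepsilon\circ F=\varepsilon$; so the theorem cannot be cited as a black box. The repair is easy, in either of two ways: observe that the inductive computation proving Theorem 3.1(i) uses only additivity of $f$, its values on the basis $\{\alpha_{i}\}$, and the identity $\Delta f=(f\otimes f)\Delta$ -- all of which $F$ enjoys, with the same argument restricting to the truncation $k\circlearrowleft_{p^{n}}$ exactly as the paper does in Corollary 3.3 and Proposition 4.2; or compose $F$ with the coefficientwise inverse Frobenius $\sum c_{i}\alpha_{i}\mapsto\sum c_{i}^{1/p}\alpha_{i}$, which is a coalgebra automorphism twist available because the structure constants of $k\circlearrowleft_{p^{n}}$ lie in $\mathbb{F}_{p}$ and $k$ is perfect, to obtain an honest $k$-linear coalgebra endomorphism with the same support behaviour. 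With either repair your halving estimate, and hence the whole proof, stands; note the paper makes exactly the same simplification when it treats $\mathcal{F}$ and $\mathcal{V}$ as Hopf endomorphisms in the proof of Theorem 5.1.
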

\begin{proof} Owning to the fact that $(k\circlearrowleft_{p^{n}})^{\ast}\cong
k[x]/(x^{p^{n}})$ as algebras, $\mathcal{G}_{p^{n}}$ is
infinitesimal. So in order to show $\mathcal{G}_{p^{n}}$ is
unipotent, it is enough to show that $H(k\circlearrowleft_{p^{n}})$
is a local algebra.

By Proposition 2.8, $H(k\circlearrowleft_{p^{n}})$ is uniserial with
the composition series $$k\subset
H(k\circlearrowleft_{p^{1}})\subset \cdots\subset
H(k\circlearrowleft_{p^{i-1}})\subset
H(k\circlearrowleft_{p^{i}})\subset \cdots\subset
H(k\circlearrowleft_{p^{n}}).$$  Lemma 4.1 implies that either
$H(k\circlearrowleft_{p^{i}})/H(k\circlearrowleft_{p^{i-1}})^{+}H(k\circlearrowleft_{p^{i}})\cong
(k\mathbb{Z}_{p})^{\ast}$ or $H(k\circlearrowleft_{p^{i}})
/H(k\circlearrowleft_{p^{i-1}})^{+}H(k\circlearrowleft_{p^{i}})$
$\cong k[x]/(x^{p})$ for any $1\leq i\leq n$. Here for a Hopf
algebra $H$, $H^{+}$ stands for the kernel of $\varepsilon:
\;H\rightarrow k$. To show $H(k\circlearrowleft_{p^{n}})$ is local,
it is enough to show
$H(k\circlearrowleft_{p^{i}})/H(k\circlearrowleft_{p^{i-1}})^{+}H(k\circlearrowleft_{p^{i}})\cong
k[x]/(x^{p})$ for all $1\leq i\leq n$ (In fact, if so then all
non-trivial paths will be nilpotent).

Otherwise, there is an $i$ such that
$H(k\circlearrowleft_{p^{i}})/H(k\circlearrowleft_{p^{i-1}})^{+}H(k\circlearrowleft_{p^{i}})\cong
(k\mathbb{Z}_{p})^{\ast}$. Take such $i$ as small as possible. By
assumption, $i\geq 2$. Thus
$H(k\circlearrowleft_{p^{i}})/H(k\circlearrowleft_{p^{i-1}})^{+}H(k\circlearrowleft_{p^{i}})\cong
(k\mathbb{Z}_{p})^{\ast}$ implies that $$\alpha_{p^{i-1}}^{p}\equiv
\alpha_{p^{i-1}}\;\;\textrm{mod}\;k\circlearrowleft_{p^{i-1}}.$$ And
thus $\alpha_{p^{i-1}}^{p^{i}}\equiv
\alpha_{p^{i-1}}\;\;\textrm{mod}\;k\circlearrowleft_{p^{i-1}}.$
Therefore there is an element $a\in k\circlearrowleft_{p^{i-1}}$
such that $\alpha_{p^{i-1}}^{p^{i-1}}=\alpha_{p^{i-1}}+a$. Since $i$
is as small as possible, $H(k\circlearrowleft_{p^{i-1}})$ is local
and all non-trivial paths in $k\circlearrowleft_{p^{i-1}}$ are
nilpotent. More precisely, let $\alpha$ be a non-trivial path living
in $k\circlearrowleft_{p^{i-1}}$, then $\alpha^{p^{i-1}}=0$. Thus
\begin{eqnarray*} \Delta(\alpha_{p^{i-1}}^{p^{i-1}})&=&(1\otimes \alpha_{p^{i-1}}+\alpha_{p^{i-1}}\otimes 1
+\sum_{0<l<p^{i-1}}\alpha_{l}\otimes \alpha_{p^{i-1}-l})^{p^{i-1}}\\
&=&1\otimes
\alpha_{p^{i-1}}^{p^{i-1}}+\alpha_{p^{i-1}}^{p^{i-1}}\otimes
1+\sum_{0<l<p^{i-1}}\alpha_{l}^{p^{i-1}}\otimes
\alpha_{p^{i-1}-l}^{p^{i-1}}\\
&=& 1\otimes
\alpha_{p^{i-1}}^{p^{i-1}}+\alpha_{p^{i-1}}^{p^{i-1}}\otimes 1.
\end{eqnarray*}
This implies that $\alpha_{p^{i-1}}^{p^{i-1}}=\alpha_{p^{i-1}}+a$ is
a primitive element and so there is a $\lambda\in k$ such that
$\alpha_{p^{i-1}}+a=\lambda\alpha_{1}$. Therefore,
$\alpha_{p^{i-1}}\in k\circlearrowleft_{p^{i-1}}$ which is
impossible.
\end{proof}

Combining Lemma 4.1, Propositions 4.2, 4.4 and 2.8, we get

\begin{corollary}  Let $n$ be a positive integer and assume
that there is a commutative Hopf structure
$H(k\circlearrowleft_{p^{n}})$ over $k\circlearrowleft_{p^{n}}$.
Then either $H(k\circlearrowleft_{p^{n}})\cong
(k\mathbb{Z}_{p^{n}})^{\ast}$ or $H(k\circlearrowleft_{p^{n}})$ is
the distribution algebra of a uniserial infinitesimal unipotent
commutative $k$-group.
\end{corollary}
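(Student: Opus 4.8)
The plan is to reduce everything to the dichotomy already isolated in Lemma 4.1, applied to the bottom nontrivial layer $k\circlearrowleft_{p}$. First I would observe that by Proposition 2.8 the given structure $H(k\circlearrowleft_{p^{n}})$ is uniserial with the chain of sub Hopf algebras $k\subset H(k\circlearrowleft_{p})\subset\cdots\subset H(k\circlearrowleft_{p^{n}})$; in particular $H(k\circlearrowleft_{p})$ really is a sub Hopf algebra, so the Convention applies and the restriction $H(k\circlearrowleft_{p})$ is itself a (commutative) Hopf structure over $k\circlearrowleft_{p}$. Lemma 4.1 then forces $H(k\circlearrowleft_{p})$ to be isomorphic to exactly one of $(k\mathbb{Z}_{p})^{\ast}$ or $k[x]/(x^{p})$, and it is this alternative that produces the two cases in the statement.

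In the case $H(k\circlearrowleft_{p})\cong(k\mathbb{Z}_{p})^{\ast}$, Proposition 4.2 applies verbatim and yields $H(k\circlearrowleft_{p^{n}})\cong(k\mathbb{Z}_{p^{n}})^{\ast}$, with nothing left to verify. In the remaining case $H(k\circlearrowleft_{p})\cong k[x]/(x^{p})$, I would invoke Proposition 4.4 to conclude that the finite algebraic group $\mathcal{G}_{p^{n}}$ determined by $\mathcal{H}(\mathcal{G}_{p^{n}})=H(k\circlearrowleft_{p^{n}})$ is infinitesimal unipotent. It then remains only to transport the adjectives \emph{commutative} and \emph{uniserial} across the equivalence between finite algebraic $k$-groups and finite-dimensional cocommutative Hopf algebras recalled before Proposition 4.4. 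Since $H(k\circlearrowleft_{p^{n}})$ is commutative by hypothesis, $\mathcal{G}_{p^{n}}$ is a commutative group; and since $H(k\circlearrowleft_{p^{n}})$ is uniserial by Proposition 2.8, its lattice of sub Hopf algebras is a chain, which under the equivalence matches the chain of closed subgroups, so $\mathcal{G}_{p^{n}}$ is uniserial.

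The steps are essentially bookkeeping, as the substantive content has already been extracted in Propositions 4.2 and 4.4 together with the uniseriality of Proposition 2.8. The only point that demands any care — the mild obstacle — is to confirm that the two types in Lemma 4.1 are genuinely exhaustive and mutually exclusive, so that the case split is complete, and to spell out the dictionary between the Hopf-algebraic conditions and the group-theoretic ones, since the phrasing of the conclusion rests on exactly that translation.
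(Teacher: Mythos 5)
Your proposal is correct and follows exactly the paper's route: the paper proves this corollary simply by the remark ``Combining Lemma 4.1, Propositions 4.2, 4.4 and 2.8, we get,'' which is precisely the case split and bookkeeping you carry out. Your extra care in transporting \emph{commutative} and \emph{uniserial} across the equivalence between finite algebraic $k$-groups and finite-dimensional cocommutative Hopf algebras only makes explicit what the paper leaves implicit.
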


\section{Classification and application}

 Fix a positive integer $n$ and consider the coalgebra
 $k\circlearrowleft_{p^{n}}$. Assume that there is a Hopf structure on
 $k\circlearrowleft_{p^{n}}$. Since its coradically graded version
 is generated by $\{\alpha_{p^{i}}|1\leq i\leq n-1\}$, it is also
 generated by $\{\alpha_{p^{i}}|1\leq i\leq n-1\}$. So in order to
 give the Hopf structure, it is enough to characterize the
 relations between $\{\alpha_{p^{i}}|1\leq i\leq n-1\}$.

  For any $0\leq d\leq n$, the Hopf algebra $L(n,d)$ (it is indeed a Hopf algebra by the following theorem) is defined to
  be the Hopf algebra over $k\circlearrowleft_{p^{n}}$ with
  relations:
  \begin{equation} \alpha_{p^{i}}\alpha_{p^{j}}=\alpha_{p^{j}}\alpha_{p^{i}},\;\;\textrm{for}\;0\leq i,j\leq
  n-1;\end{equation}
  \begin{equation} \alpha_{p^{i}}^{p}=0,\;\;\textrm{for}\;i< d;
  \end{equation}
   \begin{equation} \alpha_{p^{i}}^{p}=\alpha_{p^{i-d}},\;\;\textrm{for}\;i\geq
   d.
  \end{equation}

The main result of this section is the following.

\begin{theorem} $L(n,d)$ is a Hopf algebra and any commutative
 Hopf structure $H(k\circlearrowleft_{p^{n}})$ over $k\circlearrowleft_{p^{n}}$ is isomorphic
to an $L(n,d)$ for some $d$.
\end{theorem}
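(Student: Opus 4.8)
The plan is to split the argument according to Corollary 4.5. If $H(k\circlearrowleft_{p})\cong(k\mathbb{Z}_{p})^{\ast}$, then Proposition 4.2 already gives $H(k\circlearrowleft_{p^{n}})\cong(k\mathbb{Z}_{p^{n}})^{\ast}$, which is precisely $L(n,0)$; so the whole content lies in the unipotent case $H(k\circlearrowleft_{p})\cong k[x]/(x^{p})$, where by Proposition 4.4 the associated group $\mathcal{G}_{p^{n}}$ is infinitesimal unipotent commutative with one-dimensional Lie algebra. The observation I would exploit throughout is that, $H$ being commutative of characteristic $p$, the $p$-th power map $\phi\colon H\to H$, $\phi(x)=x^{p}$, is simultaneously an algebra map and (since $\Delta(x^{p})=\Delta(x)^{p}$) a coalgebra endomorphism. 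Hence Theorem 3.1 applies to $\phi$ and provides scalars $\lambda_{i}$ with $\phi(\alpha_{m})=\sum_{r}(\sum_{n_{1}+\cdots+n_{r}=m}\lambda_{n_{1}}\cdots\lambda_{n_{r}})\alpha_{r}$ and $\lambda_{1}=0$, because $\phi(\alpha_{1})=\alpha_{1}^{p}=0$ in the unipotent case.

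First I would show that each $L(n,d)$ is a Hopf algebra. Since the underlying coalgebra is the fixed $k\circlearrowleft_{p^{n}}$ and the monomials $\prod_{i}\alpha_{p^{i}}^{a_{i}}$ with $0\le a_{i}<p$ span a space of dimension $p^{n}$, it suffices to produce a multiplication realizing (5.1)--(5.3) for which $\Delta$ is an algebra map; the antipode is then automatic, as any connected bialgebra is a Hopf algebra. I would build this multiplication as a filtered deformation of the standard divided-power product, whose associated graded is $L(n,n)$, the only nontrivial consistency being the $p$-th power relation. Here the coalgebra endomorphism $\phi_{d}$ of Theorem 3.1(ii) with $\lambda_{p^{d}}=1$ and all other $\lambda_{i}=0$ does the work: a direct count of compositions gives $\phi_{d}(\alpha_{m})=\alpha_{m/p^{d}}$ when $p^{d}\mid m$ and $0$ otherwise, so $\phi_{d}(\alpha_{p^{i}})=\alpha_{p^{i-d}}$ matches (5.2)--(5.3) exactly and, being a coalgebra map, is automatically compatible with $\Delta$. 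Equivalently one may identify $L(n,d)$ with the distribution algebra of an explicit Frobenius kernel of a truncated Witt-type group, which is a Hopf algebra by construction.

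For the classification I would argue by induction on $n$, using that Proposition 2.8 exhibits $H(k\circlearrowleft_{p^{n-1}})$ as a sub Hopf algebra of $H=H(k\circlearrowleft_{p^{n}})$ stable under $\phi$; by induction $H(k\circlearrowleft_{p^{n-1}})\cong L(n-1,d)$, so on this subalgebra $\phi$ acts by $\alpha_{s}^{p}=\phi_{d}(\alpha_{s})$. It then remains to determine $\alpha_{p^{n-1}}^{p}$. Computing $\Delta(\alpha_{p^{n-1}}^{p})=\sum_{s+t=p^{n-1}}\alpha_{s}^{p}\otimes\alpha_{t}^{p}$ and using that $\alpha_{s}^{p}\in H(k\circlearrowleft_{p^{n-1-d}})$ for $0<s<p^{n-1}$, the interior part of the coproduct lands in $H(k\circlearrowleft_{p^{n-1-d}})^{\otimes 2}$; the matching identity $\sum_{0<s<p^{n-1}}\phi_{d}(\alpha_{s})\otimes\phi_{d}(\alpha_{p^{n-1}-s})=\sum_{0<u<p^{n-1-d}}\alpha_{u}\otimes\alpha_{p^{n-1-d}-u}$, immediate from $\phi_{d}(\alpha_{s})=\alpha_{s/p^{d}}$, forces $\alpha_{p^{n-1}}^{p}-\alpha_{p^{n-1-d}}$ to be primitive, i.e.\ a multiple of $\alpha_{1}$. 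As in the proof of Proposition 4.2 I would absorb this multiple by a coalgebra automorphism supplied by Corollary 3.3, normalizing $\alpha_{p^{n-1}}^{p}$ to $\alpha_{p^{n-1-d}}$ (or to $0$ in the degenerate case $d=n-1$, where the interior vanishes and the induction branches into $L(n,n-1)$ or $L(n,n)$). This yields $H\cong L(n,d)$.

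The main obstacle is precisely the uniformity of the shift: a priori each $\alpha_{p^{i}}^{p}$ could be an unrelated lower element, and what must be ruled out is an irregular Frobenius pattern. This amounts to forcing $\phi$ to be the single coalgebra endomorphism $\phi_{d}$, i.e.\ showing $\lambda_{j}=0$ for all $j\neq p^{d}$; the multiplicativity of $\phi$ together with $\alpha_{1}^{p}=0$ propagates these vanishings, but establishing it cleanly for all $j$ is delicate, and it is here that I would invoke the Farnsteiner--R\"ohrle--Voigt classification of infinitesimal unipotent groups of complexity one \cite{FRV}: since $\mathcal{G}_{p^{n}}$ is uniserial (Proposition 2.8) with one-dimensional Lie algebra, it lies on their list, and that list matches the family $\{L(n,d)\}_{1\le d\le n}$, the parameter $d$ being pinned down by the invariant $\dim_{k}\phi(H)=p^{n-d}$. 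The same invariant shows distinct $d$ yield non-isomorphic $L(n,d)$, so the $d$ produced above is well defined.
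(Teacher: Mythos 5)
Your skeleton coincides with the paper's: split via Corollary 4.5, dispose of the semisimple case as $L(n,0)$ via Proposition 4.2, and send the unipotent case to the Farnsteiner--R\"ohrle--Voigt classification. Your mechanism is also the right one: the $p$-th power map $\phi$ is (up to Frobenius semilinearity, which the paper likewise waves away by passing to the $\mathbb{Z}$-form) a coalgebra endomorphism governed by Theorem 3.1, and your $\phi_d$ is exactly the paper's $\mathcal{V}^{d}$, since the Verschiebung of any commutative Hopf structure over $k\circlearrowleft_{p^{n}}$ is the map dual to $x\mapsto x^{p}$ on $(k\circlearrowleft_{p^{n}})^{\ast}\cong k[x]/(x^{p^{n}})$, i.e. your $\phi_{1}$.

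The genuine gap sits at the decisive step, and you point at it yourself. Your induction stalls at the ``uniformity of the shift'', and your fallback is the sentence ``that list matches the family $\{L(n,d)\}_{1\le d\le n}$''. That matching cannot be quoted: it is essentially the statement being proved, and it is what the second half of the paper's proof actually establishes. After invoking Lemma 5.2 one only knows $\Spec_{k}(H)\cong(\mathcal{W}_{d})_{1}$, $\mathcal{U}_{m,d}$ or $\mathcal{U}_{m,d}^{j}$ for some $m,d,j$ a priori unrelated to $n$; one must then (a) compute $\mathcal{F}$ and $\mathcal{V}$ on a Hopf structure over the loop coalgebra ($\mathcal{F}=\phi$, $\mathcal{V}=\phi_{1}$), (b) use the kernel descriptions $\mathcal{U}_{m,d}=\ker(\mathcal{V}^{d}-\mathcal{F})$ and $\mathcal{U}_{m,d}^{j}=\mathcal{U}_{m,d}\cap\ker\mathcal{V}^{(m-1)(d+1)+j}$ to conclude that $\phi=\mathcal{V}^{d}=\phi_{d}$ on $H$, which is precisely relations (5.2)--(5.3), (c) compare dimensions to see these are all the relations, and (d) translate the extra kernel condition of case (iii) into $(d+1)\nmid n$. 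This commutative-diagram argument is exactly what forces the uniform Frobenius pattern you could not obtain by hand; asserting the matching instead makes the argument circular. The same omission undermines the first half of the statement: your ``filtered deformation'' construction of $L(n,d)$ never explains why a multiplication compatible with the fixed coproduct and realizing $\phi=\phi_{d}$ exists at all --- the existence of the coalgebra map $\phi_{d}$ produces no algebra structure --- and the paper obtains existence of $L(n,d)$ only as a by-product of the identifications $L(n,d)\cong\mathcal{O}(\mathcal{U}_{\frac{n}{d+1},d})$, resp. $\mathcal{O}(\mathcal{U}^{j}_{m+1,d})$, i.e. again from the step you skipped. In short: correct architecture and correct key invariant, but the theorem's actual content is assumed rather than proved.
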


One of the main ingredients of the proof is the classification
result given in \cite{FRV}. Let's recall it. By
$\mathcal{W}:\;\mathbb{M}_{k}\rightarrow \mathbb{M}_{\mathbb{Z}}$ we
denote the affine commutative group scheme of \emph{Witt vectors}.
For any positive natural number $m$ let
$\mathcal{W}_{m}:\;\mathbb{M}_{k}\rightarrow
\mathbb{M}_{\mathbb{Z}}$ be the affine commutative group scheme of
\emph{Witt vectors of length $m$}. Denote the \emph{Frobenius map}
and \emph{Verschiebung} of $\mathcal{W}_{m}$ by $\mathcal{F}$ and
$\mathcal{V}$ respectively. For any finite commutative algebraic
group $\mathcal{G}$, its \emph{Cartier dual} is denoted by
$\mathcal{D(G)}$. For details, see \cite{DG}. An infinitesimal
unipotent commutative group $\mathcal{U}$ is called
$\mathcal{V}$-\emph{uniserial} if Coker$\mathcal{V}\cong
\Spec_{k}(k[x]/(x^{p}))$. Likewise, a unipotent infinitesimal group
$\mathcal{U}$ is called $\mathcal{F}$-\emph{uniserial} if
Ker$\mathcal{F}\cong\Spec_{k}(k[x]/(x^{p}))$. Note that
$\mathcal{G}$ is $\mathcal{V}$-uniserial or $\mathcal{F}$-uniserial
is equivalent to  its distribution algebra $\mathcal{H(G)}$ is
uniserial (see Lemma 2.5 in \cite{FRV}).

Let $d,j,n\in \mathbb{N}$ and for $n\geq 1,\;d\geq 1$, we denote by
$\mathcal{U}_{n,d}$ the kernel of the endomorphism
$\mathcal{V}^{d}-\mathcal{F}:\; \mathcal{W}_{m}\rightarrow
\mathcal{W}_{m}$ with $m=n(d+1)$. Denote by $\mathcal{U}_{n,d}^{j}$
the intersection of $\mathcal{U}_{n,d}$ with the kernel of the
endomorphism
$\mathcal{V}^{(n-1)(d+1)+j}:\;\mathcal{W}_{m}\rightarrow
\mathcal{W}_{m}$ for $1\leq j\leq d$. The following is the main
result of \cite{FRV} (Theorem 1.2 in \cite{FRV}).

\begin{lemma} The following gives a complete list of representatives of
isomorphism classes of non-trivial uniserial infinitesimal unipotent
commutative $k$-groups:

\emph{(i) }$(\mathcal{W}_{d})_{1}$; $\;\;$\emph{(ii)}
$\mathcal{U}_{n,d}$; $\;\;$ \emph{(iii)}  $\mathcal{U}_{n,d}^{j}$;

\emph{(iv)} $\mathcal{D}((\mathcal{W}_{d})_{1})$; $\;\;$\emph{(v)}
$\mathcal{D}(\mathcal{U}_{n,d})$; $\;\;$ \emph{(vi)}
$\mathcal{D}(\mathcal{U}_{n,d}^{j})$.

Moreover, the groups labeled (i)-(iii) are $\mathcal{V}$-uniserial
and those in (iv)-(vi) are $\mathcal{F}$-uniserial.
\end{lemma}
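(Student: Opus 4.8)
The plan is to translate the problem into the classification of uniserial modules over the Dieudonn\'e ring and then read off the corresponding group schemes. Over the perfect field $k$, covariant Dieudonn\'e theory (\cite{DG}) provides an equivalence between the category of finite commutative infinitesimal unipotent $k$-groups and the category of finite-length modules $M$ over the Dieudonn\'e ring $D_{k}=W(k)\{F,V\}$ on which both $F$ and $V$ act nilpotently; here $W(k)$ denotes the Witt vectors of $k$ and $FV=VF=p$, with the usual Frobenius-twisted commutation of $F,V$ with scalars. Writing $M=M(\mathcal{G})$, the operators $\mathcal{F},\mathcal{V}$ correspond to $F,V$, the condition that $\mathcal{H}(\mathcal{G})$ be uniserial becomes the condition that $M$ be a uniserial $D_{k}$-module, and $\mathcal{V}$-uniseriality ($\operatorname{Coker}\mathcal{V}\cong\Spec(k[x]/(x^{p}))$) becomes $\dim_{k}M/VM=1$. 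Finally the Cartier dual $\mathcal{D}$ corresponds to the module duality that interchanges $F$ and $V$, hence interchanges $\mathcal{V}$- and $\mathcal{F}$-uniseriality. By the equivalence recorded just before the statement (Lemma 2.5 in \cite{FRV}) every uniserial $\mathcal{G}$ is $\mathcal{V}$- or $\mathcal{F}$-uniserial, so it suffices to classify the $\mathcal{V}$-uniserial groups and to obtain (iv)--(vi) as the Cartier duals $\mathcal{D}$ of (i)--(iii); the final assertion about which family is $\mathcal{V}$- and which is $\mathcal{F}$-uniserial is then automatic.

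So let $M$ be a finite-length $D_{k}$-module with $F,V$ nilpotent and $\dim_{k}M/VM=1$. Since $V$ is nilpotent, Nakayama shows $M$ is cyclic, $M=W(k)[[V]]\,m$, with $V^{\ell}m=0$ and $k$-basis $\{m,Vm,\dots,V^{\ell-1}m\}$, where $\ell=\dim_{k}M$. It remains to determine $F$. Because $F$ and $V$ commute ($FV=VF$) and $M$ is $V$-cyclic and uniserial, $Fm$ must be a unit multiple of a single power $V^{s}m$, or $0$; exactly as in the normalizations of Lemma 4.1 and Proposition 4.2, the algebraic closedness of $k$ lets one adjust the generator $m$ to arrange either $Fm=0$ or $Fm=V^{d}m$ for a well-defined $d\geq 1$. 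The identity $p=FV$ then fixes the remaining structure: $p=0$ in the first case and $pm=V^{d+1}m$ in the second. Thus $M$ is determined up to isomorphism by the pair given by the Frobenius exponent (``$\infty$'' if $Fm=0$, otherwise $d$) together with the length $\ell$, and distinct pairs yield non-isomorphic modules, since $d$ and $\ell$ can be recovered from the $F$- and $V$-actions.

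It remains to identify these modules with the listed groups and to index $\ell$ without redundancy. If $Fm=0$ then $F=0$ on all of $M$ (as $F,V$ commute), so $M$ is the module with $V$ nilpotent of order $\ell$ and $F=0$; this is the Dieudonn\'e module of the Frobenius kernel $(\mathcal{W}_{\ell})_{1}$, giving family (i). If $Fm=V^{d}m$ with $d\geq 1$, then $M$ is the cyclic $D_{k}$-module with relations $Fm=V^{d}m$ and $V^{\ell}m=0$ (whence $pm=V^{d+1}m$); this is exactly the module of the finite kernel of $\mathcal{V}^{d}-\mathcal{F}$ on $\mathcal{W}_{m}$, cut off in length by $V$. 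Writing $\ell=n(d+1)$ recovers the full kernel $\mathcal{U}_{n,d}=\operatorname{Ker}(\mathcal{V}^{d}-\mathcal{F})$ on $\mathcal{W}_{n(d+1)}$ (family (ii)), while writing $\ell=(n-1)(d+1)+j$ with $1\leq j\leq d$ recovers the further $\mathcal{V}$-truncation $\mathcal{U}_{n,d}^{j}$ (family (iii)). As $n$ and $j$ range over the allowed values, $\ell$ runs through every positive integer exactly once, so the list is exhaustive and non-redundant apart from the unavoidable self-dual coincidence at $\ell=1$. Applying $\mathcal{D}$ produces (iv)--(vi), completing the classification.

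The hard part will be the determination of the $F$-action in the second paragraph: proving that $Fm$ is forced to be a \emph{pure} power $V^{s}m$ rather than a genuine mixture of several $V^{i}m$, and that the coefficient unit can be removed over $\bar{k}$ by solving the relevant $p$-power equations, so that the normal form $Fm\in\{0,V^{d}m\}$ really is attained. The accompanying bookkeeping -- checking that the three ranges $(\mathcal{W}_{\ell})_{1}$, $\mathcal{U}_{n,d}$ and $\mathcal{U}_{n,d}^{j}$ $(1\leq j\leq d)$ partition the pairs $(d,\ell)$ with no omissions or repetitions, and that the explicit kernels $\operatorname{Ker}(\mathcal{V}^{d}-\mathcal{F})$ and their $\mathcal{V}$-truncations have precisely the computed Dieudonn\'e modules -- is the remaining routine but delicate task. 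The three dictionary entries of the first paragraph (the translation of $\mathcal{V}$-uniseriality into $\dim_{k}M/VM=1$, the compatibility of $\mathcal{F},\mathcal{V}$ with $F,V$, and the interchange of $F,V$ under Cartier duality) are all available from \cite{DG} and form the only external input.
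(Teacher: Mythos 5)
You should first note what you are up against: the paper contains no proof of this statement at all --- Lemma 5.2 is imported verbatim from Farnsteiner--R\"ohrle--Voigt (Theorem 1.2 of \cite{FRV}), so the only meaningful comparison is with the proof in that reference, and your Dieudonn\'e-module strategy is indeed the standard route and essentially the one taken there: pass to finite-length modules $M$ over $D_{k}=W(k)\{F,V\}$ with $F,V$ nilpotent, use $\dim_{k}M/VM=1$ plus Nakayama to get a $V$-cyclic module, normalize the $F$-action, and obtain (iv)--(vi) by Cartier duality. The dictionary in your first paragraph is sound. The genuine gap is the one you flag yourself: the normal-form claim --- that the generator $m$ can be chosen with $Fm=0$ or $Fm=V^{d}m$ --- is the \emph{entire} mathematical content of the classification, and you assert it rather than prove it. Your preliminary claim that ``$Fm$ must be a unit multiple of a single power $V^{s}m$'' does not follow from commutation and uniseriality: a priori $Fm=\sum_{i\geq d}c_{i}V^{i}m$ with $c_{d}\neq 0$ and arbitrary higher coefficients, and killing them is not ``exactly as in Lemma 4.1 and Proposition 4.2''. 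There one solves a single Artin--Schreier-type equation $x^{p}-x+\lambda=0$ in $k$; here one must solve a triangular system of $\sigma$-twisted equations, one for each coefficient $a_{t}$ of a new generator $m'=\sum_{t}a_{t}V^{t}m$, and check compatibility with $FV=VF=p$ throughout. Deferring this as ``the hard part'' leaves the lemma unproved.

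Moreover, the framework in which you propose to do that bookkeeping is not literally available. Whenever the length $\ell$ exceeds $d+1$, the relation $p=FV$ forces $pm=VFm=V^{d+1}m\neq 0$, so $M$ is \emph{not} killed by $p$, is not a $k$-vector space, and $\{m,Vm,\ldots,V^{\ell-1}m\}$ is not a $k$-basis: coefficients must be taken in $W(k)$, where addition involves Witt-vector carries. This affects precisely the modules underlying families (ii) and (iii) with $n\geq 2$, i.e.\ most of the list; the argument can be repaired by working in the graded pieces $V^{i}M/V^{i+1}M$ and lifting step by step, but that is exactly the delicate work carried out in \cite{FRV}, not routine verification. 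A smaller but real inaccuracy: your non-redundancy claim (``apart from the self-dual coincidence at $\ell=1$'') is off, since for $d=1$ the defining equation $\mathcal{V}-\mathcal{F}$ is symmetric under interchanging $\mathcal{F}$ and $\mathcal{V}$, so $\mathcal{U}_{n,1}$ and $\mathcal{U}_{n,1}^{j}$ are Cartier self-dual and families (ii)/(v) and (iii)/(vi) already overlap along $d=1$, not only at length $1$. In short: correct strategy and correct translation, but with the decisive normalization step missing and the $p$-torsion subtlety overlooked, what you have is an outline of the proof in \cite{FRV} rather than a proof.
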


 \textbf{Proof of the Theorem 5.1. } At first, since $H(k\circlearrowleft_{p^{n}})$
 is commutative, there is a $k$-group $\mathcal{G}$ such that
 $\mathcal{G}=\Spec_{k}(H(k\circlearrowleft_{p^{n}}))$. So the Frobenius map $\mathcal{F}$
 and Verschiebung $\mathcal{V}$ can be defined for
 $H(k\circlearrowleft_{p^{n}})$ too. Let's see what they are.
 In order to explain our understanding, there is no harm to assume that both $\mathcal{F}$ and
 $\mathcal{V}$ are Hopf endomorphisms of $H(k\circlearrowleft_{p^{n}})$ for simplicity
 since the path coalgebra can clearly be defined over $\mathbb{Z}$.
 By the definition of Frobenius map, we know that
 $$\mathcal{F}:\;H(k\circlearrowleft_{p^{n}})\rightarrow H(k\circlearrowleft_{p^{n}}),\;\;
 \alpha_{p^{i}}\mapsto  \alpha_{p^{i}}^{p},\;\;\textrm{for}\;0<i<n.$$
 Note that $\mathcal{V}$ is just the dual map of Frobenius map of
 $\mathcal{D}(\mathcal{G})$. Since as an algebra we have $(k\circlearrowleft_{p^{n}})^{\ast}\cong
 k[x]/(x^{p^{n}})$, the Frobenius map for
 $(k\circlearrowleft_{p^{n}})^{\ast}$ is given by $x\mapsto x^{p}$.
 Note also that $\{\alpha_{i}|0\leq i\leq p^{n}-1\}$ are the dual basis of $\{x^{i}|0\leq i\leq
 p^{n}-1\}$. Thus $\mathcal{V}$ is given by
 $$\mathcal{V}:\;H(k\circlearrowleft_{p^{n}})\rightarrow H(k\circlearrowleft_{p^{n}}),\;\;
 \alpha_{p^{i}}\mapsto  \alpha_{p^{i-1}},\;\;\textrm{for}\;0<i<n.$$
 Thus if $\Spec_{k}(H(k\circlearrowleft_{p^{n}}))$ is unipotent,
 then  it is a $\mathcal{V}$-uniserial group.

 According to Corollary 4.5, either $H(k\circlearrowleft_{p^{n}})\cong
(k\mathbb{Z}_{p^{n}})^{\ast}$ or $H(k\circlearrowleft_{p^{n}})$ is
the distribution algebra of a uniserial infinitesimal unipotent
commutative $k$-group. If $H(k\circlearrowleft_{p^{n}})\cong
(k\mathbb{Z}_{p^{n}})^{\ast}$, then
$H(k\circlearrowleft_{p^{n}})\cong L(n,0)$. Otherwise,
$H(k\circlearrowleft_{p^{n}})$ is a local algebra which implies that
$\Spec_{k}(H(k\circlearrowleft_{p^{n}}))$ is infinitesimal and thus
a unipotent group. By the discussion above,
$\Spec_{k}(H(k\circlearrowleft_{p^{n}}))$ is an infinitesimal
unipotent $\mathcal{V}$-uniserial group. By Lemma 5.2, we have
$\Spec_{k}(H(k\circlearrowleft_{p^{n}}))\cong (\mathcal{W}_{d})_{1}$
or $\Spec_{k}(H(k\circlearrowleft_{p^{n}}))\cong \mathcal{U}_{m,d}$
or $\Spec_{k}(H(k\circlearrowleft_{p^{n}}))\cong
\mathcal{U}_{m,d}^{j}$ for some $m,d,j$. The first case implies that
$H(k\circlearrowleft_{p^{n}})\cong L(n,n)$. Let us analyze the last
two cases. Recall that the coordinate ring of $\mathcal{W}_{n}$ is
$k[x_{1},\ldots,x_{n}]$. If $(d+1)|n$ (that is, we consider the
second case), we have a Hopf epimorphism
$$\pi:\;k[x_{1},\ldots,x_{n}]\twoheadrightarrow H(k\circlearrowleft_{p^{n}})$$
and the following commutative diagram

\begin{figure}[hbt]
\begin{picture}(100,50)(0,20)
\put(0,70){\makebox(0,0){$ \mathcal{O}(\mathcal{W}_{n})$}}
\put(20,70){\vector(1,0){50}} \put(100,70){\makebox(0,0){$
H(k\circlearrowleft_{p^{n}})$}} \put(50,75){\makebox(0,0){$ \pi$}}

\put(10,60){\vector(0,-1){50}}\put(90,60){\vector(0,-1){50}}
\put(-15,35){\makebox(0,0){$ \mathcal{V}^{d}-\mathcal{F}$}}
\put(115,35){\makebox(0,0){$ \mathcal{V}^{d}-\mathcal{F}$}}

\put(0,0){\makebox(0,0){$ \mathcal{O}(\mathcal{W}_{n})$}}
\put(20,0){\vector(1,0){50}} \put(100,0){\makebox(0,0){$
H(k\circlearrowleft_{p^{n}})$}}\put(50,5){\makebox(0,0){$ \pi$}}

\end{picture}
\end{figure}

By $\Spec_{k}(H(k\circlearrowleft_{p^{n}}))\cong
\mathcal{U}_{\frac{n}{d+1},d}$,
dim$_{k}\mathcal{O}(\mathcal{U}_{\frac{n}{d+1},d})=p^{n}$. Therefore
the above commutative diagram and the definitions of
$\mathcal{F},\mathcal{V}$ for $k\circlearrowleft_{p^{n}}$ imply that
$H(k\circlearrowleft_{p^{n}})$ satisfies equations (5.1)-(5.3)
automatically. By comparing the dimension, equations (5.1)-(5.3) are
all the relations for $H(k\circlearrowleft_{p^{n}})$. Thus
$H(k\circlearrowleft_{p^{n}})\cong L(n,d)$ with $(d+1)|n$. For the
last case (that is, $\Spec_{k}(H(k\circlearrowleft_{p^{n}}))\cong
\mathcal{U}_{m,d}^{j}$), the analysis is almost the same as the
second case and the only point we need to say is that the condition
``intersection with kernel of the endomorphism
$\mathcal{V}^{(n-1)(d+1)+j}$", appearing in the definition of
$\mathcal{U}_{m,d}^{j}$, is equivalent to the condition $(d+1)\nmid
n$.

Of course, $L(n,d)$ are all Hopf algebras now. In fact, the above
discussions show that we have $$L(n,0)\cong
(k\mathbb{Z}_{p^{n}})^{\ast},\;\;L(n,n)\cong k[x]/(x^{p^{n}}),$$ and
$L(n,d)\cong \mathcal{O}(\mathcal{U}_{\frac{n}{d+1},d})$ in case
$(d+1)|n$. If $(d+1)\nmid n$, then $n=m(d+1)+j$ for some $m,j$ with
$0<j<d+1$. The above discussions indicate that  $L(n,d)\cong
\mathcal{O}(\mathcal{U}^{j}_{m+1,d})$. $\;\;\;\;\;\;\;\;\square$

\begin{corollary} Up to Hopf
isomorphisms there are exactly $n+1$ classes of non-isomorphic
commutative Hopf structures on the coalgebra
$k\circlearrowleft_{p^{n}}$ for any natural number $n$.
\end{corollary}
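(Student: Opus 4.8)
The plan is to read off the classification from Theorem 5.1 and then separate the resulting algebras by a single numerical invariant. By Theorem 5.1 every commutative Hopf structure on $k\circlearrowleft_{p^{n}}$ is isomorphic to $L(n,d)$ for some $d$ with $0\le d\le n$, so there are at most $n+1$ isomorphism classes. It therefore suffices to prove that $L(n,0),L(n,1),\ldots,L(n,n)$ are pairwise non-isomorphic as Hopf algebras.

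For a commutative $k$-algebra $H$ I would set $H^{[p]}:=\{h^{p}\mid h\in H\}$. Since $k$ is algebraically closed, hence perfect, and since $(g+h)^{p}=g^{p}+h^{p}$ in characteristic $p$, the set $H^{[p]}$ is actually a $k$-subalgebra of $H$; moreover any algebra isomorphism $\phi$ satisfies $\phi(h^{p})=\phi(h)^{p}$, so it maps $H^{[p]}$ onto the corresponding subalgebra of the target. Thus $\dim_{k}H^{[p]}$ is an invariant of the underlying algebra, and a fortiori a Hopf-isomorphism invariant. This is the separating invariant I would use.

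It remains to compute $\dim_{k}(L(n,d))^{[p]}$. Because $L(n,d)$ is generated as an algebra by $\{\alpha_{p^{i}}\mid 0\le i\le n-1\}$, the subalgebra $L(n,d)^{[p]}$ is generated by the $p$-th powers $\alpha_{p^{i}}^{p}$. By the defining relations (5.2) and (5.3) one has $\alpha_{p^{i}}^{p}=0$ for $i<d$ and $\alpha_{p^{i}}^{p}=\alpha_{p^{i-d}}$ for $i\ge d$; hence $L(n,d)^{[p]}$ is exactly the subalgebra generated by $\{\alpha_{p^{j}}\mid 0\le j\le n-1-d\}$. By Proposition 2.8 the sub Hopf algebras of $L(n,d)$ are precisely the $H(k\circlearrowleft_{p^{i}})$, and the one generated by these lower powers is $H(k\circlearrowleft_{p^{n-d}})$ (with the convention that the empty generating set yields $k=H(k\circlearrowleft_{p^{0}})$). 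Therefore $\dim_{k}(L(n,d))^{[p]}=p^{n-d}$.

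Finally, as $d$ ranges over $0,1,\ldots,n$ the numbers $p^{n-d}$ are the $n+1$ distinct values $p^{n},p^{n-1},\ldots,p^{0}$. Consequently $L(n,d)\cong L(n,d')$ forces $d=d'$, the $n+1$ algebras are mutually non-isomorphic, and combined with Theorem 5.1 this yields exactly $n+1$ isomorphism classes. The only point that needs care is checking that $H^{[p]}$ is genuinely a subalgebra and that the displayed generators cut out a full sub Hopf algebra of the stated dimension $p^{n-d}$; both are immediate from the freshman's dream in characteristic $p$, the perfectness of $k$, and the uniserial description of Proposition 2.8, so I do not anticipate a serious obstacle.
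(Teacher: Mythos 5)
Your proof is correct, and its key step---separating the $L(n,d)$ from one another---goes by a genuinely different route than the paper's. The paper offers no separate argument for this corollary: pairwise non-isomorphy is implicit in the proof of Theorem 5.1, where each $L(n,d)$ with $d\geq 1$ is identified with the coordinate ring of a specific group on the Farnsteiner--R\"{o}hrle--Voigt list of Lemma 5.2, while $L(n,0)\cong (k\mathbb{Z}_{p^{n}})^{\ast}$ is semisimple; since Lemma 5.2 is a complete and irredundant list of isomorphism classes, distinct values of $d$ give non-isomorphic Hopf algebras. You instead distinguish them by the elementary invariant $\dim_{k}H^{[p]}$, the dimension of the subalgebra of $p$-th powers (the image of the Frobenius endomorphism), computed directly from relations (5.1)--(5.3) to be $p^{n-d}$. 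Your route buys self-containedness---it never re-invokes the FRV classification for distinctness, using Theorem 5.1 only for exhaustiveness and for the fact that each $L(n,d)$ is a $p^{n}$-dimensional Hopf structure on $k\circlearrowleft_{p^{n}}$---and it proves the stronger fact that the $L(n,d)$ are pairwise non-isomorphic already as associative $k$-algebras; the paper's route costs nothing extra given the machinery it has already deployed. Two details in your write-up deserve care, though both are fixable from the paper: your claim that Proposition 2.8 says the sub Hopf algebras are \emph{precisely} the $H(k\circlearrowleft_{p^{i}})$ overstates that proposition, but your argument only needs two containments, namely that the generators $\alpha_{p^{j}}$, $0\leq j\leq n-1-d$, lie in $k\circlearrowleft_{p^{n-d}}$, which is closed under multiplication by Proposition 2.8, and conversely that the sub Hopf structure $H(k\circlearrowleft_{p^{n-d}})$ is generated as an algebra by exactly these elements, which is the generation claim at the start of Section 5 applied to this sub Hopf structure; and both the closure of $H^{[p]}$ under addition and the reduction of $L(n,d)^{[p]}$ to the subalgebra generated by the $\alpha_{p^{i}}^{p}$ rest on commutativity of $L(n,d)$, which relation (5.1) together with generation by the $\alpha_{p^{i}}$ supplies and which you should invoke explicitly.
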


As another direct consequence of this theorem, the commutative
infinitesimal groups with 1-dimensional Lie algebras can be
classified now.

\begin{corollary} Let $\mathcal{G}$ be a commutative infinitesimal
group. If dim$_{k}$Lie$(\mathcal{G})=1$, then $\mathcal{H(G)}\cong
L(n,d)$ for some $n,d$.
\end{corollary}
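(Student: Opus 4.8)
The plan is to translate the two hypotheses on $\mathcal{G}$ into the coalgebra-theoretic setting built up in Sections~2 and~5, and then invoke Theorem~5.1. First I would pass from $\mathcal{G}$ to its distribution algebra $H:=\mathcal{H}(\mathcal{G})$, a finite-dimensional cocommutative Hopf algebra; since $\mathcal{G}$ is commutative, $H$ is moreover commutative. Over the algebraically closed field $k$ the Hopf algebra $H$ is pointed, because all of its simple subcoalgebras are one-dimensional. Finally, the hypothesis that $\mathcal{G}$ is infinitesimal, i.e.\ that $\mathcal{O}(\mathcal{G})$ is local, means $H=\mathcal{O}(\mathcal{G})^{\ast}$ has a single group-like element, so $H$ is irreducible (connected).

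Next I would identify the dual Gabriel quiver $Q(H)$. By Corollary~2.2 irreducibility forces $Q(H)$ to have exactly one vertex, so the only remaining datum is the number of loops at that vertex. This number equals $\dim_k P(H)$, where $P(H)$ denotes the space of primitive elements of $H$, and under the equivalence between finite algebraic groups and cocommutative Hopf algebras one has $P(H)=\mathrm{Lie}(\mathcal{G})$. Hence the hypothesis $\dim_k\mathrm{Lie}(\mathcal{G})=1$ says precisely that $Q(H)$ is the single loop $\circlearrowleft$.

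With the quiver identified, I would apply Lemma~2.3 to embed $H$ as a large sub coalgebra $C\subseteq k\circlearrowleft$, the Hopf structure of $H$ then living on $C$. Proposition~2.6 applies and gives $C\cong k\circlearrowleft_{p^{n}}$ as coalgebras for some $n$ (necessarily with $p^{n}=\dim_k H$). Thus $H$ is a commutative Hopf structure over the coalgebra $k\circlearrowleft_{p^{n}}$, and Theorem~5.1 yields $H\cong L(n,d)$ for some $d$, which is the claim.

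The only genuinely delicate point is the dictionary step of the second paragraph: one must be sure that the number of arrows of the dual Gabriel quiver of a connected cocommutative Hopf algebra equals $\dim_k P(H)$, and that $P(H)$ coincides with $\mathrm{Lie}(\mathcal{G})$ under the group-scheme/Hopf-algebra duality. Both facts are standard, but this is exactly where the geometric input $\dim_k\mathrm{Lie}(\mathcal{G})=1$ is converted into the combinatorial hypothesis required by Theorem~5.1; once it is in hand, the remaining steps are formal applications of the results already established.
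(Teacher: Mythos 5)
Your proof is correct and follows essentially the same route as the paper's: translate $\dim_{k}\mathrm{Lie}(\mathcal{G})=1$ into one-dimensionality of the space of primitives of $\mathcal{H}(\mathcal{G})$, use pointedness to embed $\mathcal{H}(\mathcal{G})$ into $k\circlearrowleft$ (this is Lemma~2.1 --- your citation of ``Lemma~2.3'' is a slip, as 2.3 is the Hopf-quiver definition), identify the underlying coalgebra as $k\circlearrowleft_{p^{n}}$ via Proposition~2.6, and conclude by Theorem~5.1. Your write-up is simply more explicit than the paper's terse proof about connectedness, commutativity of $\mathcal{H}(\mathcal{G})$, and the identification of the dual Gabriel quiver with the single loop.
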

\begin{proof} By dim$_{k}$Lie$(\mathcal{G})=1$, the set of primitive
elements of $\mathcal{H(G)}$ is 1-dimensional. Note that
$\mathcal{H(G)}$ is always pointed, $\mathcal{H(G)}$ can be embedded
into the path coalgebra $k\circlearrowleft$ (Lemma 2.1). Thus there
is a natural number $n$ such that $\mathcal{H(G)}$ is a Hopf
structure over $k\circlearrowleft_{p^{n}}$ by Proposition 2.6.
Thanks to Theorem 5.1, $\mathcal{H(G)}\cong L(n,d)$ for some $d$.
\end{proof}

\begin{remark} \emph{(1)} It is known that if we take
$k:=\mathbb{F}_{p}$ then the multiplication of the Witt vector group
scheme indeed corresponds to the additive of the $p$-adic numbers.
Theorem 5.1 gives us some hint that sometimes it is possible to
explain the addition of the $p$-adic numbers through the
comultiplication of path coalgebras.

\emph{(2)} For any $L(n,d)$, there is still one thing which is not
clear to us. That is, we don't know how to give the expression of
each path through generators although we can give in some special
cases (see the example below).

\emph{(3)} Not all Hopf structures over $k\circlearrowleft_{p^{n}}$
for some $n\geq 2$ are always commutative. In fact, set $p=2$ and
consider the associative algebra $H$ generated by $x,y$ with
relations
$$xy-yx=x,\;\;x^{2}=y^{2}=0.$$
Define the comultiplication $\Delta$, counit $\varepsilon$ and the
antipode through $$\Delta(x)=1\otimes x+x\otimes
1,\;\;\Delta(y)=1\otimes y+y\otimes 1+x\otimes x,$$
$$\varepsilon(x)=\varepsilon(y)=0,\;\;S(x)=-x,\;\;S(y)=-y.$$
It is straightforward to show that $H$ is indeed a Hopf algebra over
the path coalgebra $k\circlearrowleft_{p^{2}}$. Clearly, it is not
commutative.
\end{remark}

\begin{example} \emph{For the Hopf algebra $L(2,1)$, one can see that up to a coalgebra automorphism}
$$\alpha_{sp+t}=\frac{1}{s!t!}\alpha_{p}^{s}\alpha_{1}^{t},\;\;\emph{\textrm{for}}\;0\leq s\leq p-1,\;0\leq t\leq p-1.$$
\emph{In fact, we can prove this by using induction on the lengths
of pathes. If the length is 1, it is clear. Now assume it is true
for the pathes with lengths not more than $sp+t$. Now we consider
the case $sp+t+1$. To show this case, begin with an observation at
first. For any element $p$ in $k\circlearrowleft$, one always have}
$$\Delta(p)=\sum_{i=0}^{n} \alpha_{i}\otimes p_{(i)}$$
\emph{where $p_{(i)}$ are uniquely determined since
$1,\alpha_{1},\alpha_{2},\ldots$ is a basis of $k\circlearrowleft$.
For two elements $p,q$, the basic observation is, up to a coalgebra
automorphism,}
$$p=q\;\;\;\;\emph{\textrm{if and only if}} \;\;\;\;p_{(1)}=q_{(1)}.\;\;\;(\star)$$

\emph{Now we consider the case $sp+t+1$. If $0<t< p-1$, we just need
to show that
$\alpha_{sp+t+1}=\frac{1}{s!(t+1)!}\alpha_{p}^{s}\alpha_{1}^{t+1}$.
By $(\star)$, it is enough to show that
$\alpha_{sp+t}=(\frac{1}{s!(t+1)!}\alpha_{p}^{s}\alpha_{1}^{t+1})_{(1)}$.
Note that by assumption
$\frac{1}{s!(t+1)!}\alpha_{p}^{s}\alpha_{1}^{t+1}=\frac{1}{t+1}\alpha_{sp+t}\alpha_{1}$
and direct computation shows that
$(\alpha_{sp+t}\alpha_{1})_{(1)}=(t+1)\alpha_{sp+t}$.}

\emph{If $t=0$, we need show that
$\alpha_{sp+1}=\frac{1}{s!}\alpha_{p}^{s}\alpha_{1}=\alpha_{sp}\alpha_{1}$
by assumption. Clearly, $(\alpha_{sp+1})_{(1)}=\alpha_{sp}$ and
$(\alpha_{sp}\alpha_{1})_{(1)}=\alpha_{(s-1)p+(p-1)}\alpha_{1}+\alpha_{sp}$.
Note that in $L(2,1)$, $\alpha_{1}^{p}=0$ and so
$\alpha_{(s-1)p+(p-1)}\alpha_{1}=0$. By $(\star)$ again,
$\alpha_{sp+1}=\frac{1}{s!}\alpha_{p}^{s}\alpha_{1}$. }

\emph{If $t=p-1$, the equality that we need check is
$\alpha_{(s+1)p}=\frac{1}{(s+1)!}\alpha_{p}^{s+1}$. Also,
computations show that
$(\alpha_{p}^{s+1})_{(1)}=(s+1)\alpha_{p}^{s}\alpha_{p-1}=(s+1)\frac{1}{(p-1)!}\alpha_{p}^{s}\alpha_{1}^{p-1}$
and so
$(\frac{1}{(s+1)!}\alpha_{p}^{s+1})_{(1)}=\frac{1}{s!(p-1)!}\alpha_{p}^{s}\alpha_{1}^{p-1}$.
Meanwhile,
$(\alpha_{(s+1)p})_{(1)}=\alpha_{sp+(p-1)}=\frac{1}{s!(p-1)!}\alpha_{p}^{s}\alpha_{1}^{p-1}$
by assumption. Using  $(\star)$ again,
$\alpha_{(s+1)p}=\frac{1}{(s+1)!}\alpha_{p}^{s+1}$.}

\end{example}

\vskip 0.5cm

\noindent{\bf Acknowledgements:} The research was supported by the
NSF of China (10601052, 10801069, 10971206). The second author is
supported by Japan Society for the Promotion of Science under the
item ``JSPS Postdoctoral Fellowship for Foreign Researchers" and
Grand-in-Aid for Foreign JSPS Fellow. He thanks Professor A. Masuoka
for stimulating discussions. Some ideas was gotten during the second
and the third authors visited Chen Institute of Mathematics and they
thank Professor Cheng-Ming Bai for his warm-hearted helping. Part of
this work was done when the first and second authors visited the
University of Cologne under the financial support from DAAD. They
would also like to thank their host Professor Steffen K\"{o}nig for
his kind hospitality.

\end{document}